\newcounter{fig}
\newcommand{\mybic}{\author{Gianluca Cassese}
                     \address{Universit\`{a} Milano Bicocca}
                     \email{gianluca.cassese@unimib.it}
                     \curraddr{Department of Economics, Statistics and Management 
                                  Building U7, Room 2097, via Bicocca 
                                  degli Arcimboldi 8, 20126 Milano - Italy}}
\newcommand{\Pred}{\mathscr{P}}
\newcommand{\OX}{\widetilde\Omega_X}
\newcommand{\B}{\mathfrak{B}} 
\newcommand{\F}{\mathscr{F}}
\newcommand{\R}{\mathbb{R}} 
\newcommand{\N}{\mathbb{N}}
\newcommand{\Bor}{\mathscr{B}}
\newcommand{\cadlag}{\textit{c\`{a}dl\`{a}g} }
\newcommand{\caglad}{\textit{c\`{a}gl\`{a}d} }
\newcommand{\qtext}[1]{\quad\text{#1}\quad}
\newcommand{\abs}[1]{\vert #1\vert} 
\newcommand{\babs}[1]{\big\vert #1\big\vert} 
\newcommand{\Babs}[1]{\Big\vert #1\Big\vert}
\newcommand{\inner}[2]{\langle #1,#2\rangle} 
\newcommand{\inn}[1]{\inner{#1}{#1}}
\newcommand{\net}[3]{\langle #1_{#2}\rangle_{#2\in #3} } 
\newcommand{\nnet}[3]{\langle #1\rangle_{#2\in #3} } 
\newcommand{\seq}[2]{\net{#1}{#2}{\mathbb{N}}} 
\newcommand{\sseq}[2]{\nnet{#1}{#2}{\mathbb{N}}} 
\newcommand{\seqn}[1]{\seq{#1}{n}}
\newcommand{\set}[1]{\mathds{1}_{#1}}
\newcommand{\sset}[1]{\mathds{1}_{\{#1\}}}
\newcommand{\cl}[2][ ]{\overline{#2}^{\ #1}}
\newcommand{\Fun}[1]{\mathfrak{F}(#1)}
\newcommand{\emp}{\varnothing}
\newcommand{\tiref}[1]
	{(\textit{#1})}
\newcommand{\timply}[2]
	{(\textit{#1})$\Rightarrow$(\textit{#2})}
\newtheorem{theorem}{Theorem}
\theoremstyle{plain}
\newtheorem{corollary}{Corollary}
\newtheorem{definition}{Definition}
\newtheorem{example}{Example}
\newtheorem{lemma}{Lemma}
\newcommand {\La}{\mathscr L}
\newcommand {\Ta}{\mathscr T}
\newcommand {\Ha}{\mathscr H}
\begin{document} 
\title[Tanaka' Formula]
{A Generalisation of the formulas of \^Ito and Tanaka}
\mybic
\date \today 
\subjclass[2010]{
  Primary: 60G07, 60H05;
  Secondary: 60J55.
  } 
\keywords{
  Conglomerability,
  \^Ito's lemma, 
  Semimartingale,
  Stochastic calculus,
  Tanaka's formula.}

\begin{abstract} 
We prove a generalization of the formulas 
of \^Ito and Tanaka to Lipschitz continuous 
functions.
\end{abstract}

\maketitle

\section{Introduction.}
Since the early treatment given by Meyer in \cite{meyer} 
and later in greater detail by Dellacherie and Meyer 
\cite{dellacherie meyer}, the classical formulas 
of \^Ito and Tanaka are among the cornerstones 
of stochastic analysis. Roughly put, these state 
that semimartingales are a stable class with respect 
to twice continuously differentiable and to convex 
transformations respectively and provide a detailed 
description of the characteristics of the transformed 
semimartingale in terms of those of the original one.

In this paper we obtain an extension of these 
classical results to the case of Lipschitz continuous 
functions satisfying some restrictions concerning 
the second difference. Thus our results do not 
require differentiability, a circumstance that may
prove to be useful in applications. Nevertheless,
the starting point of our approach is an expansion
formula for Lipschitz functions which is quite similar
to Taylor expansion with integral remainder.

Technically speaking, the main tool employed 
in the following results is a general integral 
representation theorem developed in \cite{conglo}
and based on the notion of conglomerability.
The use of this representation is pervasive
in our analysis and brings with itself some
new features with respect to the traditional
approach. We find, e.g., quite convenient to
work with stochastic processes which have
a double time index due to the need, in our
extended setting, to work with incremental
ratios rather than derivatives.

In section 1 we prove our version of Taylor
expansion for Lipschitz functions. In the
following section 2 we define the notion of
predictable compensator for increasing
processes which need not be locally integrable.
In section 3 we prove the main results
concerning the representation of $X$-summable
functions. From these we deduce in the following
section 4 the extension of the formulas of 
\^Ito and of Tanaka to Lipschitz functions.

As for notation, we write the expected value 
of a measurable quantity $f$ (somehow 
unconventionally) as $P(f)$ and we use the 
symbols $\Fun A$ and $\B(A)$ to denote the 
class of real valued functions and of bounded 
functions on a set $A$, respectively.

All stochastic processes mentioned in the
following sections are defined on a fixed,
filtered probability space 
$\big(\Omega,\F,P;(\F_t:t\ge0)\big)$ satisfying
the usual assumptions of completeness and
right continuity. $\Ta$ will be the family of
all stopping times of the filtration which are
finite $P$ a.s. and $\Pred$ the predictable
$\sigma$ algebra.

\section{Summable functions on the real line}

We start by developing in this section an expansion
formula for Lipschitz continuous functions, quite
similar to Taylor expansion. The novel features 
of the approach presented are that do not require
differentiability and we take as our starting point
the somehow unconventional family $\Fun{\R^2}$ 
of real-valued functions on $\R^2$. Some of the
properties of real valued functions extend to
this new class in a rather natural way.

We starting defining incremental ratios inductively: 
if $F\in\Fun{\R^2}$ and $h_1,h_2,\ldots\in\R_{++}$ 
let
\begin{equation}
F^1_{h_1}(x)
=
\frac{F(x,x+h_1)}{h_1}
\qtext{and}
F^k_{h_1,\ldots,h_k}(x)
=
\frac{F^{k-1}_{h_1,\ldots,h_{k-1}}(x+h_k)
-
F^{k-1}_{h_1,\ldots,h_{k-1}}(x)}
{h_k}
\qquad
x\in\R.
\end{equation}
We also write $F^k_{h_1,\ldots,h_k}(x)$ as 
$F^k(x,h_1,\ldots,h_k)$ when regarding $F^k$ as 
a function of $\R\times\R_{++}^k$. 

The following definitions mimic those valid for 
$\Fun\R$. Denote by $\Pi_a^b$ the family of 
all finite collections $\pi=\{x_1\le\ldots\le x_N\}$ 
in $(a,b)$, directed by inclusion.

\begin{definition}
A function $F\in\Fun{\R^2}$ is said to be:
\begin{enumerate}[(i)]
\item
summable -- in symbols $F\in\mathscr I$ -- if 
$F(x,x)=0$ for all $x\in\R$ and the limit
\begin{equation}
\label{IF}
I(F;a,b)
	=
\lim_{\pi\in\Pi_a^b}\sum_{x_{n-1},x_n\in\pi}
F(x_{n-1},x_n)%
\footnote{
$I(F;-\infty,+\infty)$ is simplified into $I(F)$.
}
\end{equation}
exists and is finite for all $-\infty\le a<b\le+\infty$;
\item
of finite variation -- in symbols $F\in\mathscr V$ -- if 
$F\in\mathscr I$ and for all $-\infty\le a<b\le\infty$
\begin{equation}
\label{VF}
V(F;a,b)
	=
\limsup_{\pi\in\Pi_a^b}\sum_{x_{n-1},x_n\in\pi}\abs{F(x_{n-1},x_n)}
	<
\infty;
\end{equation}
\item
Lipschitz -- in symbols $F\in\mathscr L$ -- if 
$F\in\mathscr I$ and
\begin{equation}
\limsup_{h_1+\ldots+h_k\downarrow0}\ 
\sup_{x\in\R,0<t\le h_k}
\babs{F^k(x,h_1,\ldots,h_{k-1},t)}
<
\infty
\qquad
k\in\N.
\end{equation}
\end{enumerate}
\end{definition}

Of course $\La\subset\mathscr V\subset\mathscr I$,
\begin{equation}
I(F;a,b)=I(F;a,t)+I(F;t,b)
\qtext{and}
V(F;a,b)=V(F;a,t)+V(F;t,b)
\qquad
a\le t\le b
\end{equation}
when $F$ belongs to the corresponding class,
$\mathscr I$ or $\mathscr V$ respectively. Thus 
$F\in\mathscr V$ implies $V(F)\in\mathscr I$
while $F\in\La$ implies $V(F)\in\La$.

Our interest for $\Fun{\R^2}$ is illustrated by the 
following examples. First, any function $f\in\Fun\R$ 
generates a corresponding function 
$\widehat f\in\mathscr I$ defined as
\begin{equation}
\label{hat f}
\widehat f(x,y)
	=
f(y)-f(x)
\end{equation}
which is of finite variation or Lipschitz in the above 
defined sense if and only if $f$ is so in the usual 
sense. The notation $\widehat f$ will be used 
repeatedly throughout. In general the elements
of $\mathscr I$ considered will be of the form
\begin{equation}
\label{f circ g}
F(x,y)
=
g(x)\widehat f(x,y)
\qquad
f,g\in\Fun\R
\end{equation}
or linear combinations thereof. Note that indeed
\eqref{f circ g} defines an element of $\mathscr I$
whenever $f$ is of finite variation and the integral
$\int gdf$ exists in the Remann-Stiltijes sense.
A combination of $f,g\in\Fun\R$ of special
importance will be
\begin{equation}
\label{f*g}
(f*g)(x,y)=f(y)-f(x)-g(x)(y-x)
\end{equation}
which defines an element of $\mathscr I$ when
$g$ is Riemann integrable. It is also of interest
to note that $f*g\ge0$ if and only if $f$ is convex 
and $D_-f\le g\le D_+f$, with $D_-$ and $D_+$ 
the left and right derivatives of $f$.

\begin{theorem}
\label{th taylor}
Let $F\in\mathscr L$. Fix $b>a$ and $k\in\N$. 
There exist $\bar F^1,\ldots,\bar F^{k-1}\in\Fun\R$
and (finitely additive) probabilities $\lambda^k,\xi^k$ 
(depending on $a$ and $b$) on a ring of subsets 
of $[a,b]\times\R_{++}^k$
such that 
\begin{equation}
\label{taylor}
\begin{split}
I(F;a,b)
	&=
\sum_{j=1}^{k-1}\frac{(b-a)^j}{j!}\bar F^j(a)
+
\frac{(b-a)^k}{k!}
\int F^k(x,h_1,\ldots,h_k)d\lambda^k\\
	&=
-\sum_{j=1}^{k-1}\frac{(a-b)^j}{j!}\bar F^j(b)
-
\frac{(a-b)^k}{k!}
\int F^k(x,h_1,\ldots,h_k)d\xi^k
\end{split}•
\end{equation}

Moreover:
\begin{enumerate}[(a)]
\item
if $x\to F(x,x+t)$ is a Borel measurable of $x$ 
for each $t\in\R$ then $\bar F^j$ is measurable
for all $j\in\N$,
\item
$\lambda^k
\big(\R\times[t_1,\infty)\times\ldots\times[t_k,\infty)\big)
=
\xi^k
\big(\R\times[t_1,\infty)\times\ldots\times[t_k,\infty)\big)
=0$
for all $t_1,\ldots,t_k>0$ and
\item
if the limit
$D_+^jF(x)
	=
\lim_{h_1,\ldots,h_j\to0}F^j(x,h_1,\ldots,h_j)$
exists, then $\bar F^j(x)=D^j_+F(x)$.
\end{enumerate}
\end{theorem}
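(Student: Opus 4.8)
The plan is to reduce the statement to a purely algebraic ``discrete Taylor identity'' valid for each partition $\pi\in\Pi_a^b$, and then to pass to the limit over $\pi$ by means of the integral representation theorem of \cite{conglo}. Throughout I write $g_n=x_n-x_{n-1}$ for the increments of $\pi=\{a=x_0<\cdots<x_N=b\}$, so that $\sum_n g_n=b-a$ and, by the very definition of the first incremental ratio, $F(x_{n-1},x_n)=g_nF^1(x_{n-1},g_n)$. Hence $\sum_{x_{n-1},x_n\in\pi}F(x_{n-1},x_n)=\sum_n g_nF^1(x_{n-1},g_n)$, and the whole point is to expand the base point $x_{n-1}$ down to the endpoint $a$.

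First I would prove, by induction on $k$, the finite identity obtained by iterating the recursion defining $F^k$. The single step rests on the elementary telescoping
\[
F^{j}(x_{n-1},h)=F^{j}(a,h)+\sum_{m<n}g_m\,F^{j+1}(x_{m-1},h,g_m),
\]
which is nothing but $F^{j}(x+g,h)-F^{j}(x,h)=g\,F^{j+1}(x,h,g)$ summed over the partition points between $a$ and $x_{n-1}$ (here $h$ abbreviates a tuple of earlier increments). Inserting this repeatedly, $\sum_n g_nF^1(x_{n-1},g_n)$ splits into $k-1$ ``boundary'' blocks anchored at $a$, each of the form $\sum(\prod g)\,F^{j}(a,\cdots)$ with exactly $j$ increment factors, plus one ``remainder'' block of the same shape but with $F^k$ evaluated at the running base points $x_{m-1}$. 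The combinatorial weight of each block is a nested sum $\sum_{m_1<\cdots<m_j}g_{m_1}\cdots g_{m_j}$, which tends to $(b-a)^j/j!$ as $\pi$ refines because $\sum_n g_n^2\to0$; this is the source of the factorials in \eqref{taylor}.

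Next I would normalise each block into an integral. The remainder block equals $\frac{(b-a)^k}{k!}\int F^k\,d\mu_\pi$, where $\mu_\pi$ is the discrete probability on $[a,b]\times\R_{++}^k$ carrying the normalised weights at the points $(x_{m-1},\cdots)$, while the $j$-th boundary block is $\frac{(b-a)^j}{j!}\int F^j(a,\cdot)\,d\nu^j_\pi$ for a discrete probability $\nu^j_\pi$ on $\R_{++}^j$. The hypothesis $F\in\La$ is exactly what keeps $F^1,\ldots,F^k$ bounded on the relevant range of small increments, so all these integrals are uniformly bounded. Applying the representation theorem of \cite{conglo} to the bounded net $\langle\mu_\pi\rangle$ yields a finitely additive probability $\lambda^k$ with $\int F^k\,d\mu_\pi\to\int F^k\,d\lambda^k$; running the same telescoping from the endpoint $b$ (backward increments, whence the signs $(a-b)^j=(-1)^j(b-a)^j$) produces $\xi^k$. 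Since $F\in\La\subset\mathscr I$ the left-hand sums converge to $I(F;a,b)$, so both displayed identities hold in the limit. Setting $\bar F^j(x)=\int F^j(x,h_1,\ldots,h_j)\,d\vartheta^j$ for the finitely additive limit $\vartheta^j$ of $\langle\nu^j_\pi\rangle$ gives the required functions on all of $\R$, finite by the Lipschitz bound.

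The three addenda are then read off the construction. Property (b) holds because every increment $g_n\to0$ as $\pi$ refines, so $\mu_\pi\big(\R\times[t_1,\infty)\times\cdots\times[t_k,\infty)\big)\to0$ for $t_i>0$, and likewise for $\xi^k$; this is precisely the assertion that $\lambda^k,\xi^k$ concentrate at the origin in the increment coordinates. For (a), if $x\mapsto F(x,x+t)$ is Borel then $x\mapsto F^j(x,h_1,\ldots,h_j)$ is Borel by induction on $j$, and integration against the fixed finitely additive $\vartheta^j$ preserves measurability, so each $\bar F^j$ is measurable. For (c), when $\lim_{h\to0}F^j(x,\cdot)=D^j_+F(x)$ exists the generalised limit $\vartheta^j$ necessarily returns this common value, whence $\bar F^j(x)=D^j_+F(x)$. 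The delicate point -- and the place where conglomerability is indispensable -- is the limit passage itself: ordinary weak-$*$ compactness would only furnish subnet-dependent limits, whereas we need a single consistent family $\lambda^k,\xi^k,\vartheta^j$ for which the boundary blocks converge to exactly $\frac{(b-a)^j}{j!}\bar F^j(a)$ and $-\frac{(a-b)^j}{j!}\bar F^j(b)$ simultaneously. Reconciling these iterated limits is exactly what the representation theorem of \cite{conglo} is designed to deliver.
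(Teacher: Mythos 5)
Your route is genuinely different from the paper's. The paper never returns to the partition sums after the first line: it observes that the functional $I$ is conglomerative with respect to the map $T\colon F\mapsto F^1$, invokes \cite[Theorem 1]{conglo} once to write $I(F)=\int F^1\,d\nu$ for a single finitely additive $\nu$ on $\R\times\R_{++}$, identifies the $x$-marginal of $\nu$ as Lebesgue measure and the normalised $h$-marginal as an interval-independent ultrafilter probability $\mu$ concentrated at $0^+$, and then obtains \eqref{taylor} by iterating this representation the way Taylor's formula follows from the Newton--Leibniz theorem; the factorials come from the iterated Lebesgue integrals and $\bar F^j(x)=\int F^j(x,h_1,\ldots,h_j)\,d\mu^j$. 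Your discrete telescoping identity is a legitimate alternative skeleton: the weights $\sum_{m_1<\cdots<m_j}g_{m_1}\cdots g_{m_j}\to(b-a)^j/j!$ do converge along the net, and your argument for (b) is sound since every $\pi\supseteq\pi_0$ has mesh at most that of $\pi_0$, so the mass on $[t_1,\infty)\times\cdots$ is eventually exactly zero. Note, however, that what you actually need to pass from the bounded net $\langle\mu_\pi\rangle$ to $\lambda^k$ is merely a Banach/ultrafilter limit over the directed set $\Pi_a^b$; conglomerability plays no role in that step, so citing \cite{conglo} there does not match what that theorem provides.

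The genuine gap is the point you name in your last sentences and then wave away. The theorem requires a \emph{single} function $\bar F^j$ whose value at $a$ enters the forward expansion and whose value at $b$ enters the backward one. In your construction the $j$-th forward block is normalised by a discrete measure carried by tuples $(g_{m_1},\ldots,g_{m_j})$ with $m_j<\cdots<m_1$, while the backward block uses a different range of indices (and different weights); these are different discrete measures, and their generalised limits $\vartheta^j$ and $\tilde\vartheta^j$ are finitely additive measures that are guaranteed to agree only on functions having a limit at the origin --- which $F^j(x,\cdot)$ need not have for a general $F\in\La$. Hence $\int F^j(b,\cdot)\,d\vartheta^j$ and $\int F^j(b,\cdot)\,d\tilde\vartheta^j$ may differ, and you have not produced one $\bar F^j$ serving both identities (nor one that is canonically defined on all of $\R$ independently of $a$ and $b$, which is what makes claim (a) meaningful). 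Asserting that \cite{conglo} ``reconciles these iterated limits'' is not an argument: that theorem represents a conglomerative functional, it does not force two unrelated generalised limits to coincide. The paper resolves exactly this difficulty by deriving everything from the one measure $\nu$, whose $h$-marginal $\mu$ is the same for every subinterval, so that $\bar F^j(x)=\int F^j(x,h_1,\ldots,h_j)\,d\mu^j$ is defined once and works at both endpoints. To repair your proof you must either import that step, or construct your generalised limits so that the forward and backward increment measures share a common (for instance product-ultrafilter) limit.
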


\begin{proof}
Regarding $I(F)$ as a linear functional 
on $\mathscr L$, we conclude from \eqref{IF} that
\begin{equation}
\label{con}
I(F)<0
\qtext{implies}
\inf_{(x,h)\in\R\times\R_{++}}F^1(x,h)<0
\qquad
F\in\La
\end{equation}
i.e. that $I$ is conglomerative with respect to
the linear map $T:\La\to\La$ associating each 
$F\in\La$ with $F^1$. Moreover, 
$\abs{T F}
	\le 
T V(F)$ 
and $V(F)\in\La$ (so that $T$ is directed on 
$\La$) and $T[\La]\subset\B(\R\times\R_{++})$. 
From \cite[Theorem 1]{conglo} we deduce the 
existence of a finitely additive, positive set 
function $\nu$ on $\R\times\R_{++}$ such 
that
\begin{equation}
\label{I(F) rep}
F^1\in L^1(\nu)
\qtext{and}
I(F)
	=
\int (TF)(x,h)\nu(dx,dh)
	=
\int F^1d\nu
\qquad
F\in\La.
\end{equation}
Denote by $\nu_a^b$ the restriction of $\nu$ to 
$[a,b]\times\R_{++}$. Consider 
$F(x,y)=f(y-x)g(x)(y-x)$ 
where $f$ is bounded and with right limit $f(0^+)$ 
at $0$ and $g$ is continuous and with compact 
support. Then,
\begin{equation}
\label{margin}
F\in\La
\qtext{and}
f(0^+)\int g(t)dt
	=
I(F)
	=
\int F^1d\nu
	=
\int f(h)g(x)d\nu.
\end{equation}
Thus the $x$-marginal of $\nu$ coincides, on 
$\Bor(\R)$, with Lebesgue measure while the 
normalization of the $h$-marginal of $\nu_a^b$ 
is the probability associated with the ultrafilter
$\{D\subset\R_{++}:0\in\cl D\}$ and, being
independent of $a$ and $b$, we shall denote 
it by $\mu$.

Formula \eqref{taylor} is then deduced from 
the integral representation \eqref{I(F) rep} in 
much the same way as Taylor expansion follows 
from Newton-Leibniz Theorem. Setting $x_0=b$ 
and using definition \eqref{hat f}, we can develop 
\eqref{I(F) rep} into
\begin{equation}
\label{expand}
\begin{split}
I(F;a,b)
	&=
\int F^1(a,h_1)d\nu_a^{x_0}
+
\int I\big(\hat F^1_{h_1};a,x_1\big)d\nu_a^{x_0}\\
	&=
\sum_{j=1}^{k-1}
\int F^j(a,h_1,\ldots,h_j)
(\nu_a^{x_{j-1}}\times\ldots\times\nu_a^{x_0})
(dx_j,dh_j,\ldots,dx_1,dh_1)\\
&\quad+
\int F^k(x,h_1,\ldots,h_k)
(\nu_a^{x_{k-1}}\times\ldots\times\nu_a^{x_0})
(dx_k,dh_k,\ldots,dx_1,dh_1)\\
	&=
\sum_{j=1}^{k-1}\frac{(b-a)^j}{j!}
\int F^j(a,h_1,\ldots,h_j)d\mu^j
+
\frac{(b-a)^k}{k!}
\int F^k(x,h_1,\ldots,h_k)d\lambda^k
\end{split}
\end{equation}
where $\lambda^k$ is the normalisation of 
$\nu_a^{x_{k-1}}\times\ldots\times\nu_a^{x_0}$ 
and $\mu^j$ denotes the $j$ fold product of $\mu$.
Set
\begin{equation}
\label{bar F^j}
\bar F^j(a)
=
\int F^j(a,h_1,\ldots,h_j)d\mu^j.
\end{equation}
Alternatively, we could have expanded $I(F;a,b)$ in
\eqref{expand} (with $x_0=a$ this time) as
\begin{equation}
\begin{split}
I(F;a,b)
&=
\int F^1(b,h)d\nu_a^b
-
\int I(\hat F^1_{h_1};x_1,b)d\nu_a^b\\
&=
\sum_{j=1}^{k-1}(-1)^{j+1}
\int F^j(b,h_1,\ldots,h_j)
(\nu_{x_{j-1}}^b\times\ldots\times\nu_{x_0}^b)\\
&\quad+
(-1)^{j+1}\int F^k(x_k,h_1,\ldots,h_k)
(\nu_{x_{k-1}}^b\times\ldots\times\nu_{x_0}^b)\\
&=
-\sum_{j=1}^{k-1}\frac{(a-b)^j}{j!}\bar F^j(b)
-
\frac{(a-b)^k}{k!}\int F^k(x,h_1,\ldots,h_k)d\xi^k
\end{split}
\end{equation}
where $\xi^k$ is the normalisation of 
$\nu_{x_{k-1}}^b\times\ldots\times\nu_{x_0}^b$.
If $x\to F(x,x+t)$ is Borel measurable for each 
$t\in\R$ then $a\to F^j(a,h_1,\ldots,h_j)$ is 
measurable for all $j\in\N$ and 
$h_1,\ldots,h_j\in\R_{++}$. 
But then the integral in \eqref{bar F^j} is just the 
limit of sums which are measurable as a function 
of $a$.
Claim \tiref{b} is a simple consequence of $\mu$ 
being an ultrafilter probability
while \tiref{c} follows from ultrafilter limits
coinciding with ordinary limits whenever the 
latter exist.
\end{proof}

When $F=\widehat f$ the quantity $\bar F^j$
will be called the approximate $j$-th derivative
of $f$.


Eventually, \eqref{taylor} extends from $\La$ to
$\mathscr I$. In this case the representation 
\eqref{I(F) rep} would take the form
\begin{equation}
I(F)
	=
\phi(F^1)+\int F^1d\nu
\end{equation}
with $\phi$ a positive linear functional such that 
$\inf_{x,h}F^1(x,h)>-\infty$ implies $\phi(F^1)\ge0$. 
The formula \eqref{taylor} would then become 
considerably more involved although, with $k=1$, 
we would still get a manageable expression, such 
as
\begin{equation}
I(F,a,b)
=
\phi_a^b(F^1)
+
(b-a)\bar F^1(a)
+
(b-a)\int I(\hat F_{h_1},a,x_1)d\nu_a^b.
\end{equation}

\section{The predictable compensator}
Before examining functions which are summable 
along a semimartingale we need to extend the
notion of predictable compensator, customarily
defined for processes of locally integrable variation 
only, to the wider class of processes of finite variation.

\begin{lemma}
\label{lemma compensator}
An adapted, right continuous, increasing process 
$A$ admits one and only one (up to evanescence) 
predictable, right continuous, increasing process 
$A^p$ satisfying either one of the following two 
equivalent properties:
\begin{enumerate}[(i)]
\item
for every predictable process $Y$,
\begin{equation}
\label{compensator process}
P\int\abs YdA<\infty
\qtext{implies}
P\int YdA
=
P\int YdA^p;
\end{equation}
\item
for some disjoint family $B_1,B_2,\ldots$ in 
$\Pred$ and $B_0=\bigcap_nB_n^c$ the 
following jointly holds: 
\begin{subequations}
\label{compensator set}
\begin{equation}
\label{compensator set1}
P\int\set{B\cap B_0}dA\in\{0,+\infty\}
\qtext{while} 
P\int\set{B\cap B_0}dA^p=0
\qquad
B\in\Pred
\end{equation}
\begin{equation}
\label{compensator set2}
P\int \set{B\cap B_n}dA
	=
P\int \set{B\cap B_n}dA^p<\infty
\qquad
B\in\Pred,\ n\in\N.
\end{equation}
\end{subequations}
\end{enumerate}
\end{lemma}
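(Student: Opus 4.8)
The plan is to reduce the statement to the classical dual predictable projection for processes of integrable variation, after isolating the part of $A$ that admits no finite compensation. The central object is the set function $\mu(B)=P\int\set B\,dA$, $B\in\Pred$, which is countably additive with values in $[0,+\infty]$ but, because $A$ need not be of locally integrable variation, need not be $\sigma$ finite. The main step---and the one I expect to be the real obstacle---is to split $\Pred$ into a part on which $\mu$ is $\sigma$ finite and a part on which it is purely infinite. To do this I would introduce the finite measure $\mathbb M(B)=P\int\set B\,d(1-e^{-A})$, which shares the null sets of $\mu$, maximise $\mathbb M(C)$ over the predictable sets $C$ with $\mu(C)<\infty$, and take a countable union $B^*=\bigcup_nB_n$ of such sets realising the supremum. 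Maximality of $\mathbb M(B^*)$ then forces every predictable $D\subseteq B_0$, where $B_0=\bigcap_nB_n^c$, to satisfy $\mu(D)\in\{0,+\infty\}$, so the disjoint family $B_1,B_2,\ldots$ together with $B_0$ is exactly the one demanded in \eqref{compensator set1} and \eqref{compensator set2}.

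Granting the decomposition, the construction is routine. For $n\ge1$ the increasing process $\set{B_n}\cdot A=\int_0^\cdot\set{B_n}\,dA$ satisfies $P(\set{B_n}\cdot A)_\infty=\mu(B_n)<\infty$, hence is of integrable variation and has a classical predictable dual projection $(\set{B_n}\cdot A)^p$; testing against $\set{B_n^c}$ shows this projection is carried by $B_n$. I would then put $A^p=\sum_{n\ge1}(\set{B_n}\cdot A)^p$ and assign no mass to $B_0$. As the $B_n$ are disjoint and the $n$-th summand lives on $B_n$, the sum is predictable and increasing---right continuous, and possibly with the value $+\infty$---and \eqref{compensator set2} holds on each $B_n$ by definition of the projection, while \eqref{compensator set1} merely records that $A^p$ charges no subset of $B_0$.

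The implication \timply{ii}{i} is then a soft argument. If $Y$ is predictable with $P\int\abs Y\,dA<\infty$ then $\int\abs Y\,d\mu<\infty$ forces $Y=0$ $\mu$ a.e. on $B_0$, since there $\mu$ takes only the values $0$ and $+\infty$; hence $P\int Y\set{B_0}\,dA=0$ and $P\int Y\,dA=\sum_nP\int Y\set{B_n}\,dA$. Extending \eqref{compensator set2} from indicators to $Y$ by monotone and dominated convergence rewrites each summand as $P\int Y\set{B_n}\,dA^p$, and summing, together with the fact that $A^p$ is carried by $\bigcup_nB_n$, yields \eqref{compensator process}.

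The final points are uniqueness up to evanescence and the converse \timply{i}{ii}, and the main difficulty is concentrated on $B_0$. On $B^*$ every admissible process is forced to agree with the classical projection, unique up to evanescence; on $B_0$ the decisive constraint is the first half of \eqref{compensator set1}, requiring $A^p$ to carry no mass there and thereby selecting the minimal compensator. I would prove uniqueness directly from the explicit description \eqref{compensator set1} and \eqref{compensator set2}, recovering the family $B_1,B_2,\ldots$ from $\mu$ as above and checking that a process obeying \eqref{compensator process} reproduces the classical projection on each $B_n$. The delicate step, which I expect to absorb most of the work, is to show that the null-on-$B_0$ condition in \eqref{compensator set1} is forced---note that \eqref{compensator process} tested against $dA$ integrable $Y$ is blind to $B_0$, since every such $Y$ is $dA$ negligible there---so that \eqref{compensator process} and the pair \eqref{compensator set1}, \eqref{compensator set2} single out one and the same process.
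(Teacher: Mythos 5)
Your construction is correct and reaches the same decomposition as the paper, but by a genuinely different route. The paper does not exhaust the $\sigma$-finite part of $\mu(B)=P\int\set B\,dA$ directly: it considers the family of measures $\lambda_Y(B)=P\int Y\set B\,dA$ indexed by left-continuous predictable $Y$ with $0\le Y\le1$ and $P\int Y\,dA<\infty$, shows this family is dominated by appealing to a (finitely additive) Halmos--Savage theorem, extracts a single dominating integrand $Y_0=\sum_n2^{-n}Y_n/(1+P\int Y_n\,dA)$, takes the classical compensator $A_0^p$ of the integrable increasing process $\int Y_0\,dA$, and sets $A^p=\int\sset{Y_0>0}Y_0^{-1}\,dA_0^p$; the sets $B_n$ of (ii) are produced only afterwards, by a second Halmos--Savage argument applied to the sets $\{a\le Y\le b\}$ with $Y\in L^1(A)_+$. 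Your route --- pass to the finite measure $\mathbb M(B)=P\int\set B\,d(1-e^{-A})$, whose pathwise Stieltjes measure is mutually absolutely continuous with $dA$ and which therefore shares the null sets of $\mu$, exhaust $\sup\mathbb M(C)$ over $\{\mu(C)<\infty\}$, and compensate $\set{B_n}\cdot A$ piece by piece --- is more elementary (no Halmos--Savage machinery; the exhaustion is the textbook extraction of the $\sigma$-finite part of a measure) and delivers (ii) by construction rather than as an afterthought. The two constructions yield the same process, since both are carried by $\bigcup_nB_n$ and both represent the measure $B\mapsto P\int\set{B\cap B_0^c}\,dA$ on $\Pred$. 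One point the two constructions share, and neither fully addresses, is why $A^p$ is finite-valued: your countable sum, like the paper's $\int\sset{Y_0>0}Y_0^{-1}\,dA_0^p$, is a priori only $[0,\infty]$-valued; you at least flag this explicitly.

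On the step you single out as delicate --- that \eqref{compensator process} is blind to $B_0$ because every $Y\in L^1(A)$ is $dA$-negligible there, so that the vanishing of $A^p$ on $B_0$ must somehow be forced --- your diagnosis of the difficulty is exactly right, but the condition is \emph{not} forced and you should not spend effort trying to prove that it is. Take $A_t=\xi\set{t\ge1}$ with $\xi\ge0$, $P(\xi)=\infty$, $\xi<\infty$ a.s., and $\F_t$ trivial for $t<1$: then every $Y\in L^1(A)$ has $Y_1=0$ a.s., and every process $c\set{t\ge1}$ with $c\ge0$ satisfies \eqref{compensator process}, while only $c=0$ satisfies \eqref{compensator set1}. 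The paper handles this precisely the way your last paragraph hints it must be handled: uniqueness is proved from (ii) alone (two degenerate sets $B_0,\hat B_0$ satisfy $P\int\set{B_0\cap\hat B_0^c}\,dA=P\int\set{\hat B_0\cap B_0^c}\,dA=0$, so both candidates represent $B\mapsto P\int\set{B\cap B_0^c}\,dA$), and the implication \timply{i}{ii} is verified only for the particular process constructed, not for an arbitrary process satisfying (i). Read the lemma as asserting that there is a unique process satisfying (ii) and that this process also satisfies (i); with that reading your argument closes, and without it the ``delicate step'' you reserve most of the work for is genuinely impossible.
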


In the sequel $A^p$ will be referred to as the 
predictable compensator of $A$ and we shall
write $\inn X=[X,X]^p$. We can then consider 
the $\sigma$ finite, countably additive set 
function
\begin{equation}
\label{m_X}
m_X(B)
=
P\int\set B\ d\inn X
\qquad
B\in\Pred
\end{equation}
without requiring further assumptions than $X$
to be a semimartingale. We will refer to the set
$B_0$ in \tiref{ii} as a degenerate set for the 
process $A$ -- or rather for the measure induced 
by it. Notice that $P\int\set Bd[X,X]=m_X(B)$
if and only if $[X,X]$ is $\sigma$ integrable.

\begin{proof}
Starting with the uniqueness claim, we notice
that \tiref{ii} implies
\begin{align*}
P\int\set BdA^p
	=
P\int\set{B\hspace{0.05cm}\cap\hspace{0.05cm}
B_0^c}dA^p
	=
\lim_NP\int\set{B\hspace{0.05cm}\cap\hspace{0.05cm} 
\bigcup_{n=1}^NB_n}dA
=
P\int\set{B\hspace{0.05cm}\cap\hspace{0.05cm} 
B_0^c}dA
\qquad
B\in\Pred.
\end{align*}
If $\hat B_0,\hat B_1,\ldots,$ and $\hat A^p$ also 
satisfy \eqref{compensator set}, we deduce likewise 
$P\int\set Bd\hat A^p
	=
P\int\set{B\cap\hat B_0^c}dA$.
However, since both $B_0$ and $\hat B_0$ are 
degenerate sets for $A$, we obtain
$
P\int\set{B_0\cap \hat B_0^c}dA
	=
P\int\set{\hat B_0\cap B_0^c}dA
	=
0
$
so that
\begin{align*}
P\int\set BdA^p
	=
P\int\set{B\cap B_0^c\cap \hat B_0^c}dA
	=
P\int\set Bd\hat A^p
\qquad
B\in\Pred
\end{align*}
and $A^p$ and $\hat A^p$ coincide up to evanescence
\cite[proposition I.2.8]{js}. 

Write $L^1(A)$ for the space of predictable processes
$Z$ satisfying $P\int\abs ZdA<\infty$ and
\begin{equation}
\mathfrak Y
=
\Big\{Y\in L^1(A):
1\ge Y\ge0,
Y
\text{ is left continuous}\Big\}.
\end{equation}•
For each $Y\in\mathfrak Y$ one may regard the 
integral $P\int YdA$ as an element $\lambda_Y$ 
of $ca(\Pred)_+$. We claim that the set
$
\{\lambda_Y:
Y\in\mathfrak Y\}
$
is dominated. In fact, let the collection
$\{B_\alpha:\alpha\in\mathfrak A\}
\subset
\Pred$ 
be disjoint and such that 
$\sup_{Y\in\mathfrak Y}\lambda_Y(B_\alpha)>0$ 
for all $\alpha\in\mathfrak A$. This implies
\begin{align*}
0
	<
P\int\set{B_\alpha}dA
	=
\lim_n\lim_k
P\Big(\sset{A_n\le k}\int_0^n\set{B_\alpha}dA\Big)
	=
\lim_n\lim_k\gamma_{n,k}(B_\alpha)
\end{align*}
(the last equality being just the definition of 
$\gamma_{n,k}\in ca(\Pred)$) i.e. that 
$\gamma_{n,k}(B_\alpha)>0$ for all
$\alpha\in\mathfrak A$ and some $n,k\in\N$. 
However, each $\gamma_{n,k}$ vanishes on all 
but countably many elements of 
$\{B_\alpha:\alpha\in\mathfrak A\}$
and since $\{\gamma_{n,k}:n\in\N,k\in\N\}$ is 
countable then $\mathfrak A$ has to be countable
too. The claim then follows from \cite[theorem 3]{hs}. 
Applying Halmos and Savage Theorem we obtain
a countable collection $Y_1,Y_2,\ldots\in\mathfrak Y$ 
such that, letting 
$Y_0
	=
\sum_n2^{-n}\frac{Y_n}{1+P\int Y_ndA}$,
\begin{equation}
\lambda_Y
	\ll
\lambda_{Y_0}
\qquad
Y\in\mathfrak Y.
\end{equation}
$Y_0$ is left continuous since the uniform limit of
left continuous functions; moreover, 
$P\int Y_0dA\le1$ so that $Y\in\mathfrak Y$. 
Since the stochastic integral $\int Y_0dA$ 
generates an increasing, integrable process,
it admits a predictable compensator $A_0^p$. 
Define 
\begin{equation}
\label{Ap}
A^p
	=
\int\frac{\sset{Y_0>0}}{Y_0}dA_0^p.
\end{equation}
Since $Y_0$ is locally bounded, $A^p$ is then 
predictable, increasing and right continuous. If 
$Y\in\mathfrak Y$, then
\begin{align}
\label{comp}
P\int YdA
	=
P\int Y\sset{Y_0>0}dA
	=
P\int Y\frac{\sset{Y_0>0}}{Y_0}dA_0
	=
P\int Y\frac{\sset{Y_0>0}}{Y_0}dA_0^p
	=
P\int YdA^p
\end{align}
which proves \eqref{compensator process} for
the case $Y\in\mathfrak Y$ and thus for all
processes belonging to the generated vector
space which is dense in $L^1(A)$ in the 
corresponding metric.

Of course, $A^p$ is integrable along some 
localizing sequence $\seqn\tau$.  Let
\begin{equation*}
Z
	=
\sum_n
\frac{\set{]]\tau_{n-1},\tau_n]]}}
{1\vee P\big(A^p_{\tau_n}-A^p_{\tau_{n-1}}\big)}.
\end{equation*}
Then $Z\in L^1(A^p)$ and $0\le Z\le1$: write
$\lambda_Z^p$ for the corresponding set 
function. We notice that 
$\{\lambda_Y:Y\in L^1(A)\}$ 
is dominated by $\lambda_Z^p$. Again by 
Halmos and Savage we obtain that the sets 
of the form $\{a\le Y\le b\}$ with
$Y\in L^1(A)_+$ and $a,b>0$ themselves
generate a dominated family of measures
and admits then a dominating subfamily,
$\{a_n<Y_n< b_n\}$ with $n=1,2,\ldots$.
Letting $B_1,B_2,\ldots$ be the disjoint
sequence induced by these sets and
$B_0=\bigcap_nB_n^c$, we conclude that
$\sup_{Y\in L^1(A)}\lambda_Y(B_0)=0$
which rules out the case 
$0
	<
P\int \set{B\cap B_0}dA
	<
\infty$
for all $B\in\Pred$, thus proving \eqref{compensator set1}. 
Moreover,
\begin{align*}
P\int\set{B_n}dA
	\le
P\int\sset{a_n\le Y_n\le b_n}dA
	\le 
\frac{1}{a_n}P\int YdA
	<
\infty
\end{align*}
so that also \eqref{compensator set2} follows from
\eqref{compensator process}. We have thus shown
that \timply{i}{ii}. 

Conversely, if \tiref{ii} is true and $Y\in L^1(A)_+$ 
then necessarily $P\int\set{B_0}YdA=0$ so that
\begin{align*}
P\int YdA
	=
\lim_N\sum_{n=1}^N
P\int(Y\wedge N)\set{B_n}dA
	=
\lim_N\sum_{n=1}^NP\int(Y\wedge N)\set{B_n}dA^p
	=
P\int YdA^p
\end{align*}
that is \timply{ii}{i}. 
\end{proof}

Condition \tiref{ii} is a special version of the 
result proved by Luther \cite[theorem 1]{luther} 
that $[0,+\infty]$ valued measures split (not 
uniquely)  into the sum of a degenerate and 
of a semifinite measure. In our case we rather 
obtain $\sigma$ finiteness and uniqueness of $A^p$.
In passing we notice that $A^c$ clearly induces a
$\sigma$ finite measure so that the degenerate part
of $A$ is a pure jump process. $P\int\set{B_0}dA=0$ 
if and only if $A$ is $\sigma$ integrable (i.e. the
induced set function on $\Pred$ is $\sigma$ finite)
while the case in which $A$ is locally integrable
corresponds to $B_0=\emp$ and 
$B_n=\ ]]\tau_{n-1},\tau_n]]$. 
This shows that indeed the construction achieved
in Lemma \ref{lemma compensator} is an extension
of the classical notion of compensator.

Eventually we notice that $P\int \abs YdA^p<\infty$
need not imply $P\int \abs YdA<\infty$.

\section{Summable functions along a semimartingale}
The above notion of summability and its implications
hinge on the properties of the real line and, in order 
to obtain an extension to other domains it needs to 
be modified conveniently. In particular, the extension 
to semimartingales requires two main changes in the 
above approach: first the definition of summability has 
to take into account topology and measurability and, 
second, incremental ratios as defined above partly loose 
their importance as long as the underlying process is not 
of finite variation.

Write 
$\widetilde\Omega
=
\Omega\times\R_+\times\R_+$.

\begin{definition}
A function 
$F
	\in
\Fun{\widetilde\Omega}$ is totally $X$-summable,
in symbols $F\in\mathscr{I}_X^*$, if for all 
$\sigma,\tau\in\Ta$
(i)
$F_{\sigma,\tau}$ is $\F$ measurable,
(ii) 
$P\big(F_{\sigma,\tau}\ne0;X_\sigma=X_\tau\big)=0$
and
(iii)
the limit 
\begin{equation}
\label{IX}
I_X(F,\infty)
	=
\lim_k\sum_n
F\big(T_{n-1}^k,T_n^k\big)
\end{equation}
exists in probability and is independent of the 
intervening Riemann sequence $\sseq{T_n^k}k$. 
\end{definition}

Although, for simplicity, we will hide $\omega$ 
as an input to $F\in\Fun{\widetilde\Omega}$, if
$F\in\mathscr I_X$ and $b\in\Fun\Omega$ then
$Fb\in\mathscr I_X$ and 
$I_X(Fb,\sigma)
	=
I_X(F,\sigma)b$. 
Of course if $F\in\mathscr I_X$ and $\sigma\in\Ta$
then $F^\sigma\in\mathscr I_X$ and we write
$I_X(F,\sigma)=I_X(F^\sigma,\infty)$ and $I_X(F)$
for the stochastic process with value
$I_X(F,t)$ at $t$.

An element of $\mathscr I_X$ of special importance
is 
\begin{equation}
\label{F0}
F_0(x,y)
=
(y-x)^2.
\end{equation}
Obviously, $I_X(F_0,\infty)=[X,X]_\infty$. 
Heuristically, it is tempting to interpret 
$I_X(F)$ as a sort of generalized stochastic 
integral with respect to $X$. Returning 
to the preceding examples, when 
$F=\widehat{f\circ X}$ and $f\in\Fun\R$, 
as in \eqref{hat f}, then 
$I_X(F,\infty)=f(X_\infty)-f(X_0)$. 
If $f,g\in\Fun\R$, $g$ is measurable and 
$F=f*g$ then
$I_X(F,\infty)
	=
f(X_\infty)-f(X_0)-\int (g\circ X)_-\ dX$.

Set
$\OX
=
\{(\omega,t,u)\in\widetilde\Omega
:
X_t\ne X_u
\}$
and define the map $T_X:
\mathscr I_X
\to
\Fun{\OX}$
implicitly via
\begin{equation}
\label{TX}
T_X(F)(t,u)
	=
\frac
{F\big(t,u\big)}
{\big(X_u-X_t\big)^2}.
\end{equation}

In order to obtain that the map $T_X$ is directed, 
we consider the following two subspaces of 
$\mathscr I_X$:
\begin{subequations}
\begin{equation}
\label{Ha}
\Ha_X^*
	=
\left\{
F\in\mathscr I_X^*:
I_X(F,\infty)\in L^1(P)
\text{ and }
T_X(F)\in\B(\OX)
\right\}
\qtext{and}
\end{equation}
\begin{equation}
\label{La}
\La_X^*
	=
\{F\in\mathscr I_X^*:
I_X(F,\infty)\in L^1(P),
\abs F\le G
\text{ where }
G\in\mathscr I_X^*
\text{ and }
I_X(G,\infty)\in L^1(P)
\}.
\end{equation}
\end{subequations}
When $[X,X]_\infty\in L^1(P)$ the inclusion 
$F_0\in\Ha_X^*$ implies that $T_X$ is a 
directed map on either space.

\begin{example}
\label{ex lip}
Let $g\in\Fun\R$ be Lipschitz on any interval and $f$ 
one of its primitives. If $2K$ is the Lipschitz constant 
for $g$ on the interval $U$ then
\begin{align*}
\abs{f(y)-f(x)-g(x)(y-x)}
	&=
\Babs{\int_x^y[g(t)-g(x)]dt}
	\le
K(y-x)^2
\qquad
x,y\in U
\end{align*}
so that $\abs{T_X(f*g)}\le K$ when $X$ takes 
values in $U$.
\end{example}

We prove now a general representation theorem for
totally $X$-summable functions. 

\begin{theorem}
\label{th rep}
Let $[X,X]$ be locally integrable. There exists a 
unique linear map 
$T_X^\Pred:\Ha_X^*\to L^\infty(m_X)$
satisfying:
(i)
$T_X^\Pred(Hb)=T_X^\Pred(H)b$ when 
$b\in L^\infty(m_X)$,
(ii) 
$H\ge G$ up to a $P$ evanescent set implies 
$T_X^\Pred(H)\ge T_X^\Pred(G)$, $m_X$ a.s.
and 
(iii)
the following representation holds:
\begin{equation}
\label{rep}
P\big( I_X(H,\infty)\big)
	=
P\int T_X^\Pred(H)\ d\inn X
\qquad
H\in\Ha_X^*.
\end{equation}
\end{theorem}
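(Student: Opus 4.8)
The plan is to mirror the structure of the two earlier representation arguments in the paper. The functional $H\mapsto P\big(I_X(H,\infty)\big)$ is linear on $\Ha_X^*$, and the map $T_X$ of \eqref{TX} sends $\Ha_X^*$ into $\B(\OX)$ by the very definition \eqref{Ha}. The first step is to verify that this pair is \emph{conglomerative} in the sense needed to apply \cite[Theorem 1]{conglo}: namely that $P\big(I_X(H,\infty)\big)<0$ forces $\inf_{(\omega,t,u)\in\OX}T_X(H)(t,u)<0$. Heuristically, $I_X(H,\infty)$ is a limit of Riemann sums $\sum_n H(T_{n-1}^k,T_n^k)=\sum_n T_X(H)(T_{n-1}^k,T_n^k)\,(X_{T_n^k}-X_{T_{n-1}^k})^2$, and since the quadratic weights are nonnegative and sum (in the limit) to something controlled by $[X,X]$, a negative expectation of the total cannot coexist with $T_X(H)$ being everywhere nonnegative. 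Because $F_0\in\Ha_X^*$ with $T_X(F_0)\equiv1$ and $I_X(F_0,\infty)=[X,X]_\infty$, the map $T_X$ is directed on $\Ha_X^*$, which is exactly the hypothesis the abstract theorem requires.

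Granting conglomerability, \cite[Theorem 1]{conglo} produces a positive finitely additive set function on $\OX$ representing the functional as $P\big(I_X(H,\infty)\big)=\int T_X(H)\,d\Lambda$ for some $\Lambda$. The second step is to identify $\Lambda$ with the measure $m_X$ of \eqref{m_X}. Testing the representation on $F_0$ gives $P\big([X,X]_\infty\big)=\Lambda(\OX)=m_X(\OX)$, and testing on products $F_0\,b$ with $b\in L^\infty(m_X)$ — using property (i)-type behaviour of $I_X$ noted after the definition of $\mathscr I_X$, i.e. $I_X(Fb,\sigma)=I_X(F,\sigma)b$ — should pin down $\Lambda=m_X$ as measures on $\Pred$. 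This is where the local integrability of $[X,X]$ enters: it guarantees, via Lemma \ref{lemma compensator}, that $\inn X=[X,X]^p$ exists and that $m_X$ is genuinely $\sigma$ finite and countably additive, so that the finitely additive $\Lambda$ furnished by the abstract theorem can be upgraded to (or shown to agree with) the countably additive $m_X$. One then defines $T_X^\Pred(H)$ to be the $m_X$-equivalence class obtained by conditioning $T_X(H)$ through the representation, landing in $L^\infty(m_X)$ because $T_X(H)\in\B(\OX)$.

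With the representation \eqref{rep} in hand, properties (i) and (ii) follow formally. Linearity of $T_X^\Pred$ and the module identity (i) come from the corresponding linearity and multiplicativity of $I_X$ against $L^\infty(m_X)$ scalars; monotonicity (ii) follows because $H\ge G$ up to evanescence makes $T_X(H-G)\ge0$ off a $P$-null set, and positivity of the representing integral forces $T_X^\Pred(H-G)\ge0$ $m_X$ a.s. Uniqueness is immediate: any two candidates satisfying \eqref{rep} agree when integrated against every $\set B$, $B\in\Pred$, after multiplying $H$ by $L^\infty$ scalars and invoking (i), and two elements of $L^\infty(m_X)$ with equal integrals over all predictable sets coincide $m_X$ a.s.

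The main obstacle I expect is the second step — passing from the abstract, merely finitely additive representing functional of \cite[Theorem 1]{conglo} to the concrete countably additive measure $m_X$ on $\Pred$, and doing so in a way that is well defined on $m_X$-null sets so that $T_X^\Pred(H)$ is a bona fide element of $L^\infty(m_X)$ rather than of some finitely additive $L^\infty$. This is precisely the role of local integrability and of the compensator machinery of Section 2: one must show that the $x$/$\Pred$-marginal structure of $\Lambda$ reproduces $d\inn X$, the analogue of the Lebesgue-measure identification carried out via \eqref{margin} in the proof of Theorem \ref{th taylor}. Verifying conglomerability rigorously (controlling the Riemann sums uniformly enough to conclude the sign implication \eqref{con}) is the other delicate point, but it should follow the template already established for $I$ on $\La$.
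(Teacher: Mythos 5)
Your overall architecture coincides with the paper's: establish that $H\mapsto P\big(I_X(H,\infty)\big)$ is conglomerative with respect to $T_X$ (with $F_0$ supplying directedness), invoke \cite[Theorem 1]{conglo} to obtain a positive, finitely additive $\nu$ with $P\big(I_X(H,\infty)\big)=\int T_X(H)\,d\nu$, identify the $\Pred$-marginal of $\nu$ with $m_X$, and define $T_X^\Pred(H)$ as a Radon--Nikodym derivative; your uniqueness argument is also the paper's. Two of your steps, however, would not go through as written. First, the conglomerability you state is the unrelativised one, $\inf_{\OX}T_X(H)<0$. The paper needs, and proves via the Riemann-sum estimate \eqref{ineq} together with the closure of $P$-null sets under countable unions, the stronger statement \eqref{phi<0} in which the infimum remains negative after the removal of an arbitrary $P$-null set. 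This is what forces $\nu$ to vanish on sets with $P$-null projection, and without it your argument for the monotonicity (ii) --- ``$T_X(H-G)\ge0$ off a $P$-null set, hence the representing integral is nonnegative'' --- does not close.

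Second, and more substantively, testing on $F_0b$ via the multiplicativity $I_X(Fb,\sigma)=I_X(F,\sigma)b$ cannot identify the $\Pred$-marginal: that identity holds for $b\in\Fun\Omega$, so it only determines $\nu$ on cylinders $A\times\R_+\times\R_+$ with $A\in\F$, whereas $m_X$ lives on $\Pred$ and the dependence on the time variable must be resolved. The paper does this by testing on $H(u,v)=(X_v-X_u)^2Y_v$ with $Y$ bounded, predictable and \caglad or \cadlag, for which $T_X(H)(t,u)=Y_u$ and $I_X(H,\infty)=\int Y\,d[X,X]$ by standard results on stochastic integrals, combined with the compensator identity of Lemma \ref{lemma compensator}. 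Finally, your ``conditioning through the representation'' needs the explicit truncation bound $\lambda_g(B)=\int g\set{B}\,d\nu\le\varepsilon+km_X(B)$: this is what shows the finitely additive $\lambda_{T_X(H)}$ is absolutely continuous with respect to the $\sigma$-finite, countably additive $m_X$, so that Radon--Nikodym applies and $T_X^\Pred(H)$ is a genuine element of $L^\infty(m_X)$. You correctly anticipated that this passage is the crux; the paper's resolution is precisely the predictable test class plus the truncation estimate.
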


\begin{proof}
First we notice that uniqueness follows easily.
If $\hat T_X^\Pred$ were another such operator,
then \eqref{rep} would imply that
$
0
	=
P\int[\hat T_X^\Pred(F)-T_X^\Pred(F)]\set Bd\inn X
$
for every $B\in\Pred$, a conclusion extending to 
all $B\in\Pred$ by countable additivity. Thus,
$m_X\big(T^\Pred_X(F)>\hat T^\Pred_X(F)\big)
	=
0$.

We claim that
\begin{equation}
\label{phi<0}
P\big(I_X(H,\infty)\big)<0
	\qtext{implies}
\sup_{P(N)=0}\ 
\inf_{\substack{0\le t<u\\\omega\in N^c}}
T_X\big(H\big)(t,u)
	<
0.
\end{equation}
In fact, assume that
$
P\big(I_X(H,\infty)<-2\eta\big)
	>
\delta
$
for some $\eta,\delta>0$.
According to \eqref{IX}, along any Riemann 
sequence $\sseq{T_n^k}{k}$ and for $k$ 
sufficiently large, the inequality
\begin{equation}
\label{ineq}
\begin{split}
-\eta
	&>
\sum_n
H(T_{n-1}^k,T_n^k)\set{N^c}
\\
	&=
\sum_n\big(X_{T_n^k}-X_{T_{n-1}^k}\big)^2
\frac{H(T_{n-1}^k,T_n^k)}
{\big(X_{T_n^k}-X_{T_{n-1}^k}\big)^2}\set{N^c}
\\
	&\ge
\sum_n\big(X_{T_n^k}-X_{T_{n-1}^k}\big)^2
\inf_{\substack{0\le t<u\\\omega\in N^c}}
T_X(H)(t,u)
\end{split}
\end{equation}
obtains with probability at least as large as $\delta$,
for all $P$ null sets $N$. We deduce the claim 
from the fact that $P$ null sets form a collection
closed with respect to countable unions.

As in the proof of Theorem \ref{th taylor}, there exists 
then a positive, additive set function $\nu$ defined on 
some ring of subsets of 
$\OX$, 
satisfying $\nu(A)=0$ when the projection of 
$A$ on $\Omega$ is $P$ null,
\begin{equation}
T_X(H)\in L^1(\nu)
\qtext{and}
P\big(I_X(H,\infty)\big)
	=
\int T_X(H)d\nu
\qquad
H\in\Ha_X.
\end{equation}
For example, let $H\in\Ha_X^*$ be of the form
\begin{equation}
H(u,v)
	=
(X_v-X_u)^2Y_v
\end{equation}
with $Y$ a bounded, predictable and either \caglad
or \cadlag process. Then, $T_X(H)(t,u)=Y_u$ while, 
from standard theorems on stochastic integrals,
\begin{equation}
I_X(H,\infty)
	=
\int Y d[X,X].
\end{equation}
As in the proof of Theorem \ref{th taylor}, we 
conclude that the $\Pred$-marginal of $\nu$ 
coincides with $m_X$. More generally, fix 
$g\in L^1(\nu)_+$ and define
\begin{equation}
\lambda_g(B)
	=
\int g\set Bd\nu
\qquad
B\in\Pred.
\end{equation}
Choosing $\varepsilon>0$ arbitrary and $k$ large 
enough so that 
$\int (g\wedge k)d\nu
	\ge
\int gd\nu-\varepsilon$,
we conclude
\begin{align*}
\lambda_g(B)
	=
\int g\set Bd\nu
	\le
\varepsilon+km_X(B)
\end{align*}
i.e. $\lambda_g\ll m_X$. Since $m_X$ is $\sigma$
finite we can define $(g)^\Pred\in L^1(m_X)$ to 
be the corresponding Radon Nikodym derivative. 
Properties \tiref{i} -- \tiref{iii} above are clear. 
\end{proof}

The lack of exact derivatives forces to work with
doubly indexed stochastic processes, such as 
$T_X(F)$, a mathematical object of considerable
complexity. Nevertheless this unusual aspect
is easily approached via  integral representation 
adopted. Its main drawback is that we have 
limited information on the operator
$T^\Pred_X(F)$, despite some superficial 
resemblance with the notion of predictable 
projection. 

The restriction that $T_X(F)$ be locally bounded, 
embodied in the definition of the class $\Ha_X^*$, 
is unduly restrictive. We may partly circumvent it
by looking at the space $\La_X^*$.

\begin{theorem}
\label{th La*}
Let $[X,X]$ be locally integrable. There exist 
a linear map $T_X^\Pred:\La_X^*\to L^1(m_X)$ 
satisfying properties (i) -- (ii) of Theorem \ref{th rep} 
and
a positive linear functional $\phi$ such that
\begin{equation}
\label{rep La*}
P\big(I_X(F,\infty)\big)
=
\phi\big(T_X(F)\big)
+
P\int T_X^\Pred(F)d\inn X
\qquad
F\in\La_X^*
\end{equation}
where $\phi\big(T_X(F)\big)\ge0$ whenever 
$T_X(F)$ is uniformly lower bounded. 
\end{theorem}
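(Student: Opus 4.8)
The plan is to follow the architecture of the proof of Theorem \ref{th rep}, replacing the bounded conglomerability argument by the refined form of the representation theorem of \cite{conglo} that produces an additional purely finitely additive term --- exactly the passage anticipated, on the real line, in the remark following Theorem \ref{th taylor}. First I would record that $T_X$ is directed on $\La_X^*$: if $F\in\La_X^*$ is dominated by $G\in\mathscr I_X^*$ with $I_X(G,\infty)\in L^1(P)$, then $\abs{T_X(F)}\le T_X(G)$ pointwise on $\OX$, so $T_X(G)$ serves as the dominating image. This directedness is precisely what the representation machinery requires, and it no longer forces $T_X(F)$ to be bounded, merely dominated by an integrable image. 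Since $\La_X^*$ is relatively large, care is needed here: because $[X,X]$ is only assumed locally integrable, $F_0$ may fail to lie in $\La_X^*$ globally, so the directedness must be drawn from the domination hypothesis built into the definition of $\La_X^*$ rather than from $F_0$.

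The heart of the matter is the conglomerability step. The argument of \eqref{phi<0}--\eqref{ineq} goes through verbatim only when $\inf T_X(F)>-\infty$, since otherwise the lower bound inserted in \eqref{ineq} is vacuous. I would therefore invoke the general version of the representation theorem of \cite{conglo}, valid for directed functionals whose images need not be bounded, to obtain a positive additive set function $\nu$ on a ring of subsets of $\OX$ and a positive linear functional $\phi$ with
\[
P\big(I_X(F,\infty)\big)=\phi\big(T_X(F)\big)+\int T_X(F)\,d\nu,\qquad F\in\La_X^*,
\]
where $\nu(A)=0$ whenever the projection of $A$ on $\Omega$ is $P$ null, and where $\phi(T_X(F))\ge0$ whenever $T_X(F)$ is uniformly lower bounded. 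The functional $\phi$ is the purely finitely additive (Yosida--Hewitt) component: it is insensitive to the bounded part of $T_X(F)$ and detects only its asymptotic behaviour, which is exactly why uniform lower boundedness forces its positivity.

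It then remains to identify the countably additive part with $m_X$ and to manufacture $T_X^\Pred$ just as in Theorem \ref{th rep}. Testing against $H(u,v)=(X_v-X_u)^2Y_v$ with $Y$ bounded predictable gives $T_X(H)(t,u)=Y_u$ and $I_X(H,\infty)=\int Y\,d[X,X]$, which on such $H$ makes the $\phi$ term vanish and identifies the $\Pred$-marginal of $\nu$ with $m_X$. For general $F\in\La_X^*$, writing $g=T_X(F)$ and splitting it into positive and negative parts, the measure $\lambda_g(B)=\int g\set Bd\nu$ is absolutely continuous with respect to the $\sigma$ finite $m_X$; I define $T_X^\Pred(F)\in L^1(m_X)$ to be the resulting Radon--Nikodym derivative, so that $\int T_X(F)\,d\nu=P\int T_X^\Pred(F)\,d\inn X$. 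Linearity and properties (i)--(ii) of Theorem \ref{th rep} are read off from the corresponding properties of $\nu$ and of the Radon--Nikodym derivative, and substituting back yields \eqref{rep La*}.

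I expect the main obstacle to be the first half of the second paragraph: extracting from \cite{conglo} the decomposition with the correct positivity of $\phi$, that is, showing that the conglomerability of $F\mapsto P(I_X(F,\infty))$ survives the loss of boundedness in a form strong enough to separate cleanly a countably additive part (representable against $m_X$) from a purely finitely additive part that is positive precisely on uniformly lower bounded images. The verification that $T_X(F)$ is genuinely $\nu$ integrable for every $F\in\La_X^*$, rather than only for the dominating $G$, is the delicate technical point that the domination structure of $\La_X^*$ is designed to supply.
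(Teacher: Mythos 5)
Your proposal follows the same route as the paper's (quite terse) proof: retain the conglomerability claim \eqref{phi<0}, invoke the general form of \cite[theorem 1]{conglo} to obtain the decomposition $\phi(T_X(F))+\int T_X(F)\,d\nu$ with $\phi$ positive and vanishing on bounded functions, identify the $\Pred$-marginal of $\nu$ with $m_X$ by restricting to $\Ha_X^*\subset\La_X^*$, and define $T_X^\Pred$ by Radon--Nikodym. Your added remarks on directedness via the domination built into $\La_X^*$ and on why uniform lower boundedness yields $\phi(T_X(F))\ge0$ are correct elaborations of steps the paper leaves implicit, not a different argument.
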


\begin{proof}
The proof is quite similar to that of Theorem 
\ref{th rep}. In particular we can retain from 
that result the proof of claim \eqref{phi<0} 
and deduce from \cite[theorem 1]{conglo} 
the representation
\begin{equation}
P\big(I_X(F,\infty)\big)
	=
\phi\big(T_X(F)\big)
+
\int T_X(F)d\nu
\qquad
F\in\La_X^*,
\end{equation}
with $\phi$ a positive linear functional vanishing 
on $\B(\widetilde\Omega)$ and $\nu$ a positive, 
additive set function on some ring of subsets of 
$\widetilde\Omega$ which vanishes on $P$ 
evanescent sets and is such that 
$T_X(F)\in L^1(\nu)$. 
Restricting attention to $\Ha_X^*\subset\La_X^*$ 
we conclude that $\nu$ possesses exactly the same 
properties as the corresponding set function in 
Theorem \ref{th rep}. We then deduce as above that
\begin{equation}
\int T_X(F)d\nu
=
P\int T_X^\Pred(F)d\inn X.
\end{equation}
\end{proof}

We obtain considerable more information for
$X$-summable functions defined as follows:

\begin{definition}
A function $F\in\Fun{\widetilde\Omega}$ is 
$X$-summable, in symbols $X\in\mathscr I_X$, 
if for all $\sigma,\tau\in\Ta$
(i)
$F_{\sigma,\tau}$ 
is $\F_{\sigma\vee\tau}$ measurable and
(ii)
$F^\sigma\in\mathscr I_X^*$ 
where
\begin{equation}
\label{Fs}
F^\sigma(u,v)
=
F(u\wedge\sigma,v\wedge\sigma)
\qquad
u,v\in\R_+.
\end{equation}
\end{definition}
If $F\in\mathscr I_X$ and $\sigma\in\Ta$ then 
we write 
$I_X(F,\sigma)
=
I_X(F^\sigma,\infty)
$
and denote by $I(F)$ the corresponding 
stochastic process which is adapted by
definition. 

We may define likewise the classes
\begin{subequations}
\begin{equation}
\Ha_X
=
\big\{F\in\mathscr I_X:
F^{\sigma}\in\Ha_X^*
\text{ for all }
\sigma\in\Ta\big\}
\qtext{and}
\end{equation}
\begin{equation}
\La_X
=
\big\{F\in\mathscr I_X:
F^{\sigma}\in\La_X^*
\text{ for all }
\sigma\in\Ta\big\}.
\end{equation}•
\end{subequations}•

\begin{theorem}
\label{th Ha}
Let $F\in\Ha_X$. Then, $I_X(F)$ is a process of 
locally integrable variation with compensator
$
\int T_X^\Pred(F)d\inn X
$. 
Moreover,
\begin{equation}
\label{representation}
I_X(F,t)
	=
\int_0^t T_X^\Pred(F)d\inn{X^c}
+
\sum_{s\le t}\Delta I_X(F,s).
\end{equation}
\end{theorem}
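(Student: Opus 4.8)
The plan is to combine the pointwise representation of Theorem~\ref{th rep} with the localization built into the definition of $\Ha_X$, so that the global identity \eqref{rep} upgrades to a statement about the full stochastic process $I_X(F)$ rather than just its expectation. The key observation is that for $F\in\Ha_X$ we have $F^\sigma\in\Ha_X^*$ for \emph{every} $\sigma\in\Ta$, so Theorem~\ref{th rep} applies to each stopped version and yields
\begin{equation}
P\big(I_X(F,\sigma)\big)
=
P\int_0^\sigma T_X^\Pred(F)\,d\inn X
\qquad
\sigma\in\Ta.
\end{equation}
The first task is to verify that $T_X^\Pred(F^\sigma)=T_X^\Pred(F)\set{[[0,\sigma]]}$ up to $m_X$-null sets; this should follow from the uniqueness in Theorem~\ref{th rep} together with property~(i) (the module structure over $L^\infty(m_X)$), since $T_X(F^\sigma)=T_X(F)\set{[[0,\sigma]]}$ directly from definition \eqref{TX} and \eqref{Fs}.

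Next I would establish that $I_X(F)$ is of locally integrable variation. Since $F\in\Ha_X$ means $T_X(F)$ is locally bounded and $I_X(F^\sigma,\infty)\in L^1(P)$ for all $\sigma$, one applies the representation to $|F|$-type majorants: using property~(ii) of Theorem~\ref{th rep} to compare $F$ against the process $G(u,v)=(X_v-X_u)^2\|T_X(F)\|_\infty$, whose $I_X$ equals a bounded multiple of $[X,X]$, one controls the total variation of $I_X(F)$ by $\int\|T_X(F)\|_\infty\,d\inn X$ along the localizing sequence making $[X,X]$ integrable. This gives local integrability and simultaneously identifies $\int T_X^\Pred(F)\,d\inn X$ as a candidate compensator.

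To confirm it \emph{is} the compensator, I would check the defining property from Lemma~\ref{lemma compensator}: for a bounded predictable $Y$, multiply $F$ by $Y$ (so that $Fb\in\Ha_X$ with $b=Y$ evaluated suitably, using that $I_X(Fb,\sigma)=I_X(F,\sigma)b$ as noted after definition of $\mathscr I_X^*$) and apply the expectation identity to recover
\begin{equation}
P\int Y\,dI_X(F)
=
P\int Y\,T_X^\Pred(F)\,d\inn X.
\end{equation}
Since $\inn X=[X,X]^p$ is predictable and the right side is already a predictable integral, this is precisely \eqref{compensator process}, so $\int T_X^\Pred(F)\,d\inn X$ is the predictable compensator. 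The decomposition \eqref{representation} then follows by splitting $\inn X$ into its continuous part $\inn{X^c}$ and its jumps: the continuous part contributes $\int_0^t T_X^\Pred(F)\,d\inn{X^c}$, while the jump part of the compensator, together with the genuine jumps of $I_X(F)$ itself, assembles into $\sum_{s\le t}\Delta I_X(F,s)$.

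The \textbf{main obstacle} I anticipate is the interchange between the global (expectation-level) representation of Theorem~\ref{th rep} and the pathwise process statement—in particular, justifying that the compensator property holds $Y$-by-$Y$ rather than merely in expectation, and that the stopping-time localization is compatible with the finitely additive set function $\nu$ underlying $T_X^\Pred$. Because $\nu$ is only finitely additive, one must be careful that the module identity $T_X^\Pred(F)\set{[[0,\sigma]]}$ behaves well under the countably many stopping times of a localizing sequence; the saving grace is that $T_X^\Pred(F)$ lives in the genuinely countably additive space $L^\infty(m_X)$, so once the identity is pushed through $T_X^\Pred$ the remaining manipulations take place in an ordinary $\sigma$-finite measure space and the jump/continuous splitting of $\inn X$ is classical.
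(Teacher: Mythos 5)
Your outline reproduces the first half of the paper's argument---the part that works when Theorem \ref{th rep} is actually applicable---but it omits the step that makes the theorem true as stated. Theorem \ref{th rep}, and hence the identity $P\big(I_X(F,\sigma)\big)=P\int_0^\sigma T_X^\Pred(F)\,d\inn X$ on which everything you write is built, is proved only under the hypothesis that $[X,X]$ is locally integrable, whereas Theorem \ref{th Ha} carries no such restriction on $X$. The paper first establishes \eqref{representation} for locally bounded $X$ (essentially your argument: the expectation identity at stopping times, the bound $P\big(\abs{I_X(F,\tau)-I_X(F,\sigma)}\big)\le P\int_\sigma^\tau\abs{T_X^\Pred(F)}\,d\inn X$ giving locally integrable variation and a \cadlag modification, then the classical fact that the finite-variation local martingale $M=I_X(F)-\int T_X^\Pred(F)\,d\inn X$ is a pure jump martingale), and then runs an induction on jumps: setting $X^0=X-\sum_{s\le\cdot}\Delta X_s\sset{\abs{\Delta X_s}>a}$ and adding the finitely many large jumps on $[0,T]$ back one at a time, it verifies that if $Y$ satisfies \eqref{representation} and $Z=Y+D_\tau\set{[[\tau,+\infty[[}$ then so does $Z$, using that $T_Y(\cdot)=T_Z(\cdot)$ and $m_Y=m_Z$ on the relevant stochastic intervals. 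Without this induction your proof covers only locally bounded $X$, which is a genuinely weaker statement.

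Two further points. First, to verify the compensator property you multiply $F$ by a predictable process $Y$ and invoke $I_X(Fb,\sigma)=I_X(F,\sigma)b$; but that identity is asserted only for $b\in\Fun\Omega$, i.e.\ a random variable not depending on the time indices. The device the paper uses is to insert the predictable factor into the second time argument, as in $G(t,u)=F(t,u)\set{]]0,\sigma]]}(u)$, and to compute the effect on $T_X$ and $T_X^\Pred$ directly; alternatively, for a process of locally integrable variation it suffices to test the compensator identity on stochastic intervals $]]0,\sigma]]$. Second, your closing step, ``splitting $\inn X$ into its continuous part $\inn{X^c}$ and its jumps,'' is not correct as stated: $\inn{X^c}$ is the predictable quadratic variation of the continuous martingale part of $X$ and is in general \emph{not} the continuous part of $\inn X=[X,X]^p$ (for a Poisson process $N$ one has $\inn N_t=t$ continuous while $\inn{N^c}=0$). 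The passage from the compensator statement to \eqref{representation} rests on $M$ being a purely discontinuous martingale together with an explicit identification of the jumps of $I_X(F)$, not on a decomposition of the measure $d\inn X$.
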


\begin{proof}
Write
$G(t,u)
=
F(t,u)\set{]]0,\sigma]]}(u)$.
Then $G\in\mathscr I_X$ and 
$I_X(F^\sigma,\infty)=I_X(G,\infty)$ so that 
\begin{align*}
P\big(I_X(F,\sigma)\big)
	=
P\int T^\Pred_X(G)d\inn X
	=
P\int T^\Pred_X(F)_u\set{]]0,\sigma]]}(u)\inn X_u
	=
P\int_0^\sigma T_X(F)d\inn X.
\end{align*}

Consider first the case in which $X$ is locally 
bounded and thus $[X,X]$ locally integrable. 
Fix $\sigma,\tau\in\Ta$ and let $b\in\B(\F)$ 
be such that 
$\abs{I_X(F,\tau)-I_X(F,\sigma)}
	=
I_X(Fb,\tau)-I_X(Fb,\sigma)
$.
Then,
\begin{align*}
P\big(\babs{I_X(F,\tau)-I_X(F,\sigma)}\big)
	=
P\int_\sigma^\tau T_X^\Pred(Fb)d\inn X
	\le
P\int_\sigma^\tau \babs{T_X^\Pred(F)}d\inn X.
\end{align*}
This proves that indeed $I_X(F)$ is of locally 
integrable variation and \cadlag in mean. By 
right continuity and completeness of the 
filtration, $I_X(F)$ admits then a \cadlag 
modification. From classical results, 
\cite[VI.13]{dellacherie meyer}, we further 
deduce that 
$M_t
=
I_X(F,t)-\int_0^t T_X^\Pred(F)d\inn{X}$
is a pure jump martingale so that
\begin{equation}
\sum_{s\le t}\Delta I_X(F,s)
	=
M_t
 +
\sum_{s\le t}T_X^\Pred(F)_s\Delta X_s^2
\qquad
t\in\R_+,
\end{equation}
from which \eqref{representation} readily follows. This 
sets the basis for an induction proof of the claim 
for $X$ general. As for the induction step, suppose 
now that $Y$ is a semimartingale satisfying 
\eqref{representation}, that $\tau\in\Ta$ and that
\begin{equation*}
Z_t
=
Y_t + D_\tau\set{[[\tau,+\infty[[}
\end{equation*}
with $D_\tau$ an $\F_\tau$ measurable, finite valued
random variable. $Z$ is then a semimartingale.
Fix $F\in\mathscr I_Z$. Upon computing $I_Z(F,t)$ along
a Riemann sequence which contains $\tau\wedge t$, we
easily obtain
\begin{equation*}
I_Z(F,t)
=
I_Z(F,t\wedge\tau)+I_Z(F\set{]]t\wedge\tau,+\infty[[},t)
=
I_Z(F,t\wedge\tau)+I_Y(F\set{]]t\wedge\tau,+\infty[[},t).
\end{equation*}
The first equality implicitly proves that 
$F\set{]]t\wedge\tau,+\infty[[}\in\mathscr I_Z$; 
the second one follows from the fact that 
$F\set{]]t\wedge\tau,+\infty[[}\in\mathscr I_Z$
is equivalent to
$F\set{]]t\wedge\tau,+\infty[[}\in\mathscr I_Y$. 
Clearly, $Y_t=Z_t$ on $]]0,\tau[[$ while 
$Y_t-Y_\tau=Z_t-Z_\tau$ on $]]\tau,+\infty[[$ so that 
on either stochastic interval $T_Y(\cdot)=T_Z(\cdot)$
as well as $m_Y=m_Z$ and therefore 
$T_Y^\Pred(\cdot)=T_Z^\Pred(\cdot)$. 

Thus 
\begin{align*}
I_Z(F,t)
	&=
I_Z(F,\tau\wedge t-)
+
\Delta I_Z(F,\tau\wedge t)
+
I_Y\big(F\set{]]\tau\wedge t,+\infty[[},t\big)\\
	&=
\int_0^{\tau\wedge t} T_Z^\Pred(F)d\inn{Z^c}
+
\sum_{s<\tau\wedge t}\Delta I_Z(F,s)
+
\Delta I_Z(F,\tau\wedge t)\\
&\quad+
\int_{\tau\wedge t}^t T_Y^\Pred(F)d\inn{Y^c}
+
\sum_{s\le t}\Delta I_Y(F\set{]]\tau\wedge t,+\infty[[},s)\\
	&=
\int_0^t T_Z^\Pred(F)d\inn{Z^c}
+
\sum_{s\le t}\Delta I_Z(F,s).
\end{align*}
This shows that if $Y$ meets \eqref{representation} 
then so does $Z$.

Thus if $X$ is a general semimartingale, $a>0$ 
and $T>0$ we may define 
\begin{align}
\label{jumps}
X^0_t
	=
X_t
-
\sum_{s\le t}\Delta X_s\sset{\abs{\Delta X_s}>a}
\qtext{and}
X^n
=
X^{n-1}
+
\Delta X_{T_n}\set{[[T_n,\infty[[}
\qquad
n=1,\ldots,N
\end{align}
where $T_1,\ldots,T_N$ are the random times  
exhausting the jumps of $X$ larger than $a$ on 
$[0,T]$.

Thus, $X^0$ satisfies the representation 
\eqref{representation} by the the first part of 
this proof while each $X^n$ is obtained from 
$X^{n-1}$ by adding a jump and thus meets 
\eqref{representation} by the induction step.
\end{proof}

The following result shows that it is still possible 
to obtain useful deductions even if $F\in\La_X$.

\begin{theorem}
\label{th La}
Let $F\in\La_X$ be such that $I_X(F)$ is \cadlag 
and that $T_X(F^\sigma)$ is lower bounded 
for each $\sigma\in\Ta$. Then, $I_X(F)$ is a 
submartingale of locally integrable variation 
representable as
\begin{equation}
\label{tanaka}
I_X(F,t)
	=
A^c_t
+
\int_0^t T_X^\Pred(F)d\inn{X^c}
+
\sum_{s\le t}\Delta I_X(F,s)
\qquad
t\in\R_+
\end{equation}
with $A^c$ a continuous, increasing process.
\end{theorem}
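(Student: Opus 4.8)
The plan is to bootstrap from Theorem \ref{th La*} and Theorem \ref{th Ha}, using the function $F_0$ of \eqref{F0} to convert the lower bound on $T_X(F^\sigma)$ into genuine nonnegativity. Since $T_X(F_0)\equiv1$, for each $\sigma\in\Ta$ there is a constant $c_\sigma\ge0$ with $T_X\big((F+c_\sigma F_0)^\sigma\big)\ge0$; as $T_X(G)\ge0$ amounts to $G\ge0$ on $\OX$ (the denominator $(X_u-X_t)^2$ being strictly positive there), the stopped function $\tilde F^\sigma:=(F+c_\sigma F_0)^\sigma$ is nonnegative, so every Riemann sum defining $I_X(\tilde F^\sigma,\cdot)$ is nonnegative and $I_X(\tilde F^\sigma)$ is a.s. nondecreasing. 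Choosing a localising sequence along which the $c_\sigma$ may be taken equal to a single constant $c$, I would first record that $I_X(F)=I_X(\tilde F)-c[X,X]$ is locally a difference of two increasing processes, hence of locally integrable variation; here I would reduce to locally bounded $X$ (so that $[X,X]$ is locally integrable) and then pass to the general case by the jump–extraction induction already carried out in the proof of Theorem \ref{th Ha}.

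Next I would invoke Theorem \ref{th La*}. Applied to the increment $G=F\set B\set{]]\sigma,\tau]]}$ with $\sigma\le\tau$ in $\Ta$ and $B\in\F_\sigma$, it yields
\[
P\big(\set B(I_X(F,\tau)-I_X(F,\sigma))\big)=\phi\big(T_X(G)\big)+P\int T_X^\Pred(G)\,d\inn X,
\]
and $T_X(G)$ inherits the lower bound of $T_X(F^\tau)$, so that $\phi(T_X(G))\ge0$ by \eqref{rep La*}. The nonnegativity of this $\phi$–term is precisely what upgrades the finite–variation process of the previous step to a submartingale: the functional $\phi$ contributes an extra nonnegative predictable drift beyond the $m_X$–integral. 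I would then turn this drift into a process by fixing a representation as in \eqref{rep La*} and defining $A^c$ to be the increasing process whose increments are governed by $\phi$, namely the remainder of $I_X(F)$ left after subtracting the $m_X$–integral and the jumps.

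To pin down \eqref{tanaka} I would truncate. For each $N$ the function $\tilde F_N$ determined by $T_X(\tilde F_N)=T_X(\tilde F)\wedge N$ lies in $\Ha_X$ and is nonnegative, so Theorem \ref{th Ha} applies and gives $I_X(\tilde F_N,t)=\int_0^t T_X^\Pred(\tilde F_N)\,d\inn{X^c}+\sum_{s\le t}\Delta I_X(\tilde F_N,s)$, with no $\phi$–term since $T_X(\tilde F_N)$ is bounded. Letting $N\uparrow\infty$, the continuous compensator terms and the jump terms converge monotonically; the bounded parts reproduce the two terms of \eqref{representation} for $\tilde F$, while the mass of $T_X(\tilde F)$ lost under truncation — exactly the part carried by $\phi$ — is collected in the continuous increasing residual $A^c$. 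Passing back through $I_X(F)=I_X(\tilde F)-c[X,X]$ and using $[X,X]^c=\inn{X^c}$ cancels the spurious $c\inn{X^c}$ contributions and replaces $T_X^\Pred(\tilde F)$ by $T_X^\Pred(F)$, producing \eqref{tanaka}.

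The delicate point, which I expect to be the main obstacle, is the continuity of $A^c$. The crucial structural fact is that the functional $\phi$ of Theorem \ref{th La*} vanishes on $\B(\widetilde\Omega)$. At a jump time of $X$ the increments $X_u-X_t$ stay bounded away from $0$, so $T_X(F)=F/(X_u-X_t)^2$ is bounded near such a time; the jump contributions therefore belong to $\B(\widetilde\Omega)$ and are invisible to $\phi$, being recorded instead by the $m_X$–integral and by $\sum_{s\le t}\Delta I_X(F,s)$. Consequently $A^c$, which is by construction the part seen by $\phi$, carries no jumps. I would formalise this by splitting $T_X(F)$ into a bounded part supported where $\abs{X_u-X_t}$ is large and a remainder concentrated near the diagonal, showing that $\phi$ annihilates the former and that the latter yields only continuous mass; combined with the nonnegativity already established, this gives that $A^c$ is continuous and increasing and completes the identification in \eqref{tanaka}.
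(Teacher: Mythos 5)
Your argument follows the paper up to the submartingale property: like the paper, you apply Theorem \ref{th La*} to increments over $]]\sigma,\tau]]$ and use the positivity of $\phi$ on lower-bounded arguments to conclude that $Y=I_X(F)-\int T_X^\Pred(F)\,d\inn X$ has nonnegative expected increments. (Your route to locally integrable variation, via $I_X(F)=I_X(F+cF_0)-c[X,X]$ with the first term increasing and integrable at the localising times because $I_X(F^\sigma,\infty)\in L^1(P)$ by the definition of $\La_X^*$, is a legitimate and arguably cleaner alternative to the paper's domination inequality.) Where you diverge is in the construction of $A^c$, and that is where the proposal has genuine gaps.

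First, the truncation step is not justified. You define $\tilde F_N$ by $T_X(\tilde F_N)=T_X(\tilde F)\wedge N$, i.e.\ $\tilde F_N=\tilde F\wedge NF_0$ pointwise, and assert $\tilde F_N\in\Ha_X$. But membership in $\Ha_X$ requires $\tilde F_N\in\mathscr I_X$, that is, existence of the limit \eqref{IX} of the Riemann sums of $\tilde F_N$ in probability, independently of the Riemann sequence; $X$-summability is a statement about convergence of sums and is not stable under taking pointwise minima with $NF_0$. Nothing in your argument (or in the paper's machinery) delivers this. Second, even granting it, the passage $N\uparrow\infty$ requires $I_X(\tilde F_N,t)\to I_X(\tilde F,t)$ as processes and an identification of the residual as a \emph{continuous} increasing process; truncation at the level of $T_X$ does not obviously commute with the limit of Riemann sums, and monotone convergence of $\int T_X^\Pred(\tilde F_N)\,d\inn{X^c}$ needs $T_X^\Pred(\tilde F_N)\uparrow T_X^\Pred(\tilde F)$, which rests on setwise convergence of the finitely additive integrals $\int(g\wedge N)\set B\,d\nu$ and is not addressed. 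Your continuity argument for $A^c$ --- that $\phi$ vanishes on $\B(\widetilde\Omega)$ and jump contributions are bounded, hence invisible to $\phi$ --- is a plausible heuristic but not a proof: $\phi$ acts on functions of two time indices, not on processes, and the link between ``$\phi$ only sees the unbounded part of $T_X(F)$ near the diagonal'' and ``the residual process has no jumps'' is exactly what must be established. The paper sidesteps all of this: having shown $Y$ is a submartingale of locally integrable variation, it writes $Y=M+A$ with $M$ a compensated sum of jumps and $A$ predictable increasing, and identifies $A^c$ as the continuous part of $A$ by matching the jumps of the two sides of the decomposition. You should either adopt that decomposition or supply the missing summability and convergence arguments for your truncation scheme.
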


\begin{proof}
The proof is essentially the same as that of 
Theorem \ref{th Ha}, so we only 
sketch the salient points. Assume that $[X,X]$ 
is locally integrable. Then, from Theorem 
\ref{th La*}
\begin{align*}
P\big(I_X(F,\sigma)\big)
=
\phi\big(T_X(F)\set{]]0,\sigma]]}\big)
+
P\int_0^\sigma T_X^\Pred(F)d\inn X
\ge
P\int_0^\sigma T_X^\Pred(F)d\inn X.
\end{align*}
The proof that $I_X(F)$ is of locally integrable 
variation follows from the inequality
\begin{align*}
P\Big(\babs{I_X(F,\tau)-I_X(F,\sigma)}\Big)
\le
\phi\big(\abs{T_X(F)}\set{]]\sigma,\tau]]}\big)
+
\int\abs{T_X(F)}\set{]]\sigma,\tau]]}d\nu.
\end{align*}
Then, 
$Y=I_X(F)-\int T_X^\Pred(F)d\inn{X}$
is a process of locally integrable variation and,
since 
\begin{align*}
P(Y_\tau-Y_\sigma)
=
\phi\big(T_X(F)\set{]]\sigma,\tau]]}\big)
\ge
0
\qquad
\sigma,\tau\in\Ta,\
\sigma\le\tau,
\end{align*}
a submartingale too. We write it as $Y=M+A$
with $M$, its martingale part, a compensated 
sum of jumps and $A$ an increasing, predictable 
process. Looking at its jumps we find that
\begin{align*}
\sum_{s\le t}\big\{\Delta I_X(F,s)
-
T_X^\Pred(F)_s\Delta X_s^2\big\}
=
M+\sum_{s\le t}\Delta A_s
\end{align*}
which proves the claim.
\end{proof}

It is clear that both Theorems remain true when 
$F\in\Ha_X$ or $F\in\La_X$ locally.

\section{Applications}

In this section we specialize our preceding 
results, proving extensions of \^Ito's and 
Tanaka's formulas from the class of 
$\mathscr C^2$ and convex functions
respectively to the class of locally Lipschitz 
continuous functions.

Recall \eqref{taylor}. The assumptions of the 
following Corollary are clearly met by 
$\mathscr C^2$ functions.

\begin{corollary}
\label{cor Ito}
Let $f\in\Fun\R$ be Lipschitz on intervals and 
assume that
\begin{equation}
\label{bounded}
\sup_{x\in K}\limsup_{h_1,h_2}
\babs{\widehat f^{\ 2}(x,h_1,h_2)}
	<
\infty
\qquad
K\subset\R\text{ compact}.
\end{equation}
Then $f(X)$ is a semimartingale and, denoting 
by $g$ the approximate derivative of $f$,
\begin{equation}
\label{Ito}
\begin{split}
f(X_t)
	&=
f(X_0)
+
\int_0^t(g\circ X)_-dX
+
\int_0^t T_X^\Pred\big(f*g\big)d\inn{X^c}\\
	&\quad+
\sum_{s\le t}\big\{\Delta(f\circ X)_s-(g\circ X)_{s-}\Delta X_s\big\}.
\end{split}
\end{equation}
\end{corollary}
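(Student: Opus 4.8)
The plan is to reduce Corollary~\ref{cor Ito} to an application of Theorem~\ref{th La}, with the Taylor expansion of Theorem~\ref{th taylor} supplying the approximate derivative $g$ and the decomposition of $f$ into the first-order increment plus the convexity-type remainder $f*g$. First I would write the defining identity from \eqref{f*g},
\begin{equation*}
\widehat{f\circ X}(u,v)
=
g(X_u)(X_v-X_u)
+
(f*g)(X_u,X_v),
\end{equation*}
so that summing along a Riemann sequence and passing to the limit gives, via \eqref{IX} and the remarks following \eqref{TX},
\begin{equation*}
f(X_t)-f(X_0)
=
\int_0^t(g\circ X)_-dX
+
I_X(f*g,t).
\end{equation*}
Here $g$ is the approximate derivative $\bar F^1$ furnished by Theorem~\ref{th taylor} applied to $F=\widehat f$, and the first term is an honest stochastic integral because $g\circ X$ is \caglad after taking left limits. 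The whole burden then falls on identifying $I_X(f*g,t)$, and this is exactly where Theorem~\ref{th La} is designed to apply.

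The key verification is that $F=f*g$ lies in $\La_X$ and meets the hypotheses of Theorem~\ref{th La}. Example~\ref{ex lip} already shows that on any interval $U$ on which $g$ is Lipschitz with constant $2K$ one has $f*g\ge0$ and $\abs{T_X(f*g)}\le K$; the bound \eqref{bounded} is precisely what controls the second difference uniformly on compacts and thus guarantees, after a localisation argument stopping $X$ before it leaves a compact set, that $f*g\in\Ha_X$ locally with $T_X(f*g)$ bounded and nonnegative. Nonnegativity of $T_X(f*g)$ delivers the lower-boundedness hypothesis $T_X(F^\sigma)\ge0$ required by Theorem~\ref{th La}, and it is also what forces $I_X(f*g)$ to be increasing, hence \cadlag, so the standing assumptions of that theorem are satisfied.

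With these checks in place, Theorem~\ref{th La} yields
\begin{equation*}
I_X(f*g,t)
=
A^c_t
+
\int_0^t T_X^\Pred(f*g)d\inn{X^c}
+
\sum_{s\le t}\Delta I_X(f*g,s),
\end{equation*}
and the remaining task is to identify the jump and the continuous parts explicitly. Along a Riemann sequence the jump of $I_X(f*g)$ at $s$ is $(f*g)(X_{s-},X_s)=\Delta(f\circ X)_s-g(X_{s-})\Delta X_s$, which combines with the jump contribution of the stochastic integral $\int(g\circ X)_-dX$ to produce the bracketed jump term in \eqref{Ito}. The continuous increasing remainder $A^c$ must then be shown to vanish: this is the main obstacle, and I expect to dispatch it by specialising to $\mathscr C^2$ functions where the classical \^Ito formula pins down the continuous part as $\int T_X^\Pred(f*g)d\inn{X^c}$ with no extra increasing term, and then arguing that under \eqref{bounded} the functional $\phi$ of Theorem~\ref{th La*} contributes nothing to the continuous part because $T_X(f*g)$ is bounded, so that $\phi$ vanishes on $\B(\widetilde\Omega)$ forces $A^c\equiv0$. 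Assembling the stochastic integral, the continuous $\inn{X^c}$ term, and the jumps then gives exactly \eqref{Ito}.
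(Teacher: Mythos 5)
Your opening reduction is sound and matches the paper's implicit starting point: the identity $(f*g)(X_u,X_v)=\widehat{f\circ X}(u,v)-g(X_u)(X_v-X_u)$ together with the remark following \eqref{TX} gives $f(X_t)-f(X_0)=\int_0^t(g\circ X)_-dX+I_X(f*g,t)$, and your identification of the jumps $\Delta I_X(f*g,s)=\Delta(f\circ X)_s-g(X_{s-})\Delta X_s$ is correct. The genuine gap is in the middle step. You assert that Example \ref{ex lip} shows $f*g\ge0$, hence $T_X(f*g)\ge0$. It does not: it shows only the two-sided bound $\abs{T_X(f*g)}\le K$, and as the paper notes right after \eqref{f*g}, $f*g\ge0$ is equivalent to $f$ being convex with $D_-f\le g\le D_+f$, which hypothesis \eqref{bounded} does not deliver. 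This false nonnegativity claim is what routes you to Theorem \ref{th La} --- i.e.\ to the Tanaka-type setting of Corollary \ref{cor Tanaka}, whose hypothesis \eqref{pos} is precisely the one-sided version of \eqref{bounded} --- and saddles you with the increasing process $A^c$, which you then cannot remove: the proposed removal by specialising to $\mathscr C^2$ functions and invoking the classical \^Ito formula is a non sequitur here, since $f$ is merely Lipschitz and you supply no approximation argument transferring the conclusion back.

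The paper's route avoids the problem entirely. From \eqref{taylor} one has $(f*g)(u,v)=\frac{(v-u)^2}{2}\int\widehat f^{\ 2}(x,h_1,h_2)d\lambda^2$ with $\lambda^2$ charging only sets of the form $\R\times(0,1/n]^2$, so \eqref{bounded} yields that $T_X(f*g)$ is \emph{bounded}, above and below, on bounded intervals; after localizing a locally bounded $X$ this places $f*g$ in $\Ha_X$, and Theorem \ref{th Ha} --- not Theorem \ref{th La} --- gives \eqref{representation} directly, with no $A^c$ term ever appearing; the general semimartingale case then follows by the same jump-adding induction as in Theorem \ref{th Ha}. Your closing remark that boundedness of $T_X(f*g)$ forces the functional $\phi$ to vanish is in substance the observation that $f*g\in\Ha_X$; had you followed that thread from the outset you would have landed on Theorem \ref{th Ha} and the proof would close. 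As written, the nonnegativity claim is false and the step $A^c\equiv0$ is not established.
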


\begin{proof}
By \eqref{taylor} we have 
$(f*g)(u,v)
=
\frac{(v-u)^2}{2}\int_u^v\widehat f^{\ 2}(x,h_1,h_2)
d\lambda^2$. 
Given that $\lambda^2$ only charges sets of 
the form $\R\times(0,1/n]^2$ for all $n\in\N$, 
we conclude from \eqref{bounded} that $T_X(f*g)$ 
is bounded on bounded intervals. If $X$ is locally 
bounded then, by localization, we can assume 
that $[X,X]$ and $\int (g\circ X)_-dX$ are integrable 
and that $f*g\in\Ha_X$. We then deduce \eqref{Ito} 
from \eqref{representation}. The extension to the 
case in which $X$ is a general semimartingale is 
obtained as in Theorem \ref{th rep}.
\end{proof}

Corollary \ref{cor Ito} provides an exact expansion
of $f(X)$ even if $f$ is not differentiable. An 
application of Theorem \ref{th La} delivers the 
following generalization of Tanaka's formula.
The assumptions of the following Corollary are
clearly met by convex functions.
\begin{corollary}
\label{cor Tanaka}
Let $f\in\Fun\R$ be Lipschitz on intervals and 
assume that
\begin{equation}
\label{pos}
\inf_{x\in K}\liminf_{h_1,h_2}
\widehat f^{\ 2}(x,h_1,h_2)
	>
-\infty
\qquad
K\subset\R\ compact.
\end{equation}
Then $f(X)$ is a semimartingale and, denoting the
approximate derivative by $g$,
\begin{equation}
\label{tanaka}
\begin{split}
f(X_t)
	&=
f(X_0)
+
A_t^c
+
\int_0^t(g\circ X)_-dX
+
\int_0^tT_X^\Pred(f*g)d\inn{X^c}\\
&\quad+
\sum_{s\le t}
\big\{
\Delta(f\circ X)_t-(g\circ X)_{s-}\Delta X_s
\big\}
\end{split}•
\end{equation}
with $A^c$ a continuous, increasing process.
\end{corollary}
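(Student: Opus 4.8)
The plan is to mirror the proof of Corollary \ref{cor Ito}, replacing the appeal to Theorem \ref{th Ha} with an appeal to Theorem \ref{th La}. First I would set $g$ to be the approximate derivative of $f$, so that $F = f*g$ is the object of interest, and use the second-order Taylor expansion \eqref{taylor} to write
\begin{equation*}
(f*g)(u,v)
=
\frac{(v-u)^2}{2}\int_u^v\widehat f^{\ 2}(x,h_1,h_2)\,d\lambda^2,
\end{equation*}
exactly as in the preceding Corollary. The crucial difference is that the present hypothesis \eqref{pos} is a one-sided lower bound on the second difference $\widehat f^{\ 2}$ rather than the two-sided bound \eqref{bounded}. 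Since $\lambda^2$ charges only sets of the form $\R\times(0,1/n]^2$ (property \tiref{b} of Theorem \ref{th taylor}), the integral above inherits a lower bound from \eqref{pos}, so I would conclude that $T_X(f*g)$ is \emph{lower bounded} on bounded intervals, without being able to bound it from above. This is precisely the hypothesis that $T_X(F^\sigma)$ be lower bounded required by Theorem \ref{th La}.

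Next I would verify membership $f*g\in\La_X$ rather than $\Ha_X$. The lower bound on $T_X(f*g)$, together with the defining inequality $\abs{f*g}\le G$ for a suitable dominating $G\in\mathscr I_X$, should place $f*g$ in $\La_X$ after localization; one natural choice is to dominate $f*g$ using the second difference together with $F_0(x,y)=(y-x)^2$, exploiting that $I_X(F_0,\infty)=[X,X]_\infty$. As in the previous proofs I would first assume $X$ locally bounded, so that $[X,X]$ is locally integrable and, by localizing, both $[X,X]$ and $\int(g\circ X)_-\,dX$ are integrable and $f*g\in\La_X^*$ along the localizing sequence. I also need $I_X(f*g)$ to be \cadlag: since $I_X(\widehat{f\circ X},\infty)=f(X_\infty)-f(X_0)$ and $I_X(f*g,\infty)=f(X_\infty)-f(X_0)-\int(g\circ X)_-\,dX$, the right-hand side is \cadlag because $f(X)$ is \cadlag and the stochastic integral is \cadlag, so this hypothesis of Theorem \ref{th La} is automatic.

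With these verifications in hand, Theorem \ref{th La} applies directly and yields that $I_X(f*g)$ is a submartingale of locally integrable variation admitting the representation \eqref{tanaka} of that theorem, namely
\begin{equation*}
I_X(f*g,t)
=
A^c_t
+
\int_0^t T_X^\Pred(f*g)\,d\inn{X^c}
+
\sum_{s\le t}\Delta I_X(f*g,s)
\end{equation*}
with $A^c$ continuous and increasing. I would then substitute $I_X(f*g,t)=f(X_t)-f(X_0)-\int_0^t(g\circ X)_-\,dX$ and compute the jumps $\Delta I_X(f*g,s)=\Delta(f\circ X)_s-(g\circ X)_{s-}\Delta X_s$, which rearranges into the stated formula \eqref{tanaka}. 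Finally, the passage from locally bounded $X$ to a general semimartingale is handled by the jump-surgery induction of Theorem \ref{th Ha}, using the decomposition \eqref{jumps}; since that induction step was established for the representation \eqref{representation} and the extra continuous increasing term $A^c$ is unaffected by the addition of a single jump (the surgery only alters the jump part on a single stochastic interval), the argument carries over verbatim.

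The main obstacle I anticipate is the membership claim $f*g\in\La_X$ and the identification of a legitimate dominating function $G$: unlike the two-sided case of Corollary \ref{cor Ito}, here $T_X(f*g)$ may be unbounded above, so one cannot simply invoke Example \ref{ex lip}, and I must produce a genuine $G\in\mathscr I_X$ with $I_X(G,\infty)\in L^1(P)$ dominating $\abs{f*g}$. The natural candidate is a multiple of $F_0$ controlled by the local Lipschitz behaviour of $g$, but making this rigorous while only assuming the one-sided bound \eqref{pos} — rather than a full two-sided Lipschitz estimate — is the delicate point that warrants care.
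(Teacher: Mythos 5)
Your overall route is the same as the paper's: use the second--order expansion \eqref{taylor} together with property \tiref{b} of Theorem \ref{th taylor} to turn \eqref{pos} into a local lower bound on $T_X(f*g)$, verify the hypotheses of Theorem \ref{th La} after localizing to $X$ locally bounded, and read off \eqref{tanaka} from the representation of $I_X(f*g)$. The one step you leave open --- and correctly flag as the delicate point --- is the domination needed for $f*g\in\La_X$, and here your proposed candidate does not work. You suggest dominating $\abs{f*g}$ by ``a multiple of $F_0$'', i.e.\ $\abs{f*g}\le kF_0$; but that inequality is exactly the statement that $T_X(f*g)$ is bounded on \emph{both} sides, which is the two--sided hypothesis \eqref{bounded} of Corollary \ref{cor Ito} and precisely what \eqref{pos} fails to provide. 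No dominating function of the form $kF_0$ exists under the hypotheses of this corollary.

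The paper's resolution is a small but essential trick: from \eqref{taylor} and \eqref{pos} one gets, on each interval, a constant $k>0$ with $f*g\ge -kF_0$, and hence the pointwise inequality
\begin{equation*}
\abs{f*g}\le f*g+kF_0
\end{equation*}
(for an appropriate $k$), so the dominating function is $G=f*g+kF_0$ \emph{itself containing the term $f*g$}. This $G$ is nonnegative, lies in $\mathscr I_X^*$, and
\begin{equation*}
I_X(G,\infty)=f(X_\infty)-f(X_0)-\int(g\circ X)_-\,dX+k[X,X]_\infty
\end{equation*}
is integrable after localization when $X$ is locally bounded, which is all that the definition \eqref{La} of $\La_X^*$ requires. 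With this substitution your argument closes: the remaining verifications (the \cadlag property of $I_X(f*g)$ via the identity $I_X(f*g,\infty)=f(X_\infty)-f(X_0)-\int(g\circ X)_-\,dX$, the computation of the jumps, and the extension to general $X$ by the jump surgery of Theorem \ref{th Ha}) are carried out as you describe, and indeed more explicitly than in the paper, whose proof consists only of the domination inequality and an appeal to Theorem \ref{th La}.
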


\begin{proof}
By \eqref{taylor} and \eqref{pos} we conclude 
that for each interval there exists a constant 
$k>0$ such that $\abs{f*g}\le f*g+kF_0$. Thus,
if $X$ is locally bounded then $f*g\in\Ha_X$ 
locally and $T_X(f*g)$ is locally lower bounded.
Then \eqref{tanaka} is a consequence of Theorem
\ref{th La}.
\end{proof}

\end{document}